\newcommand{\FF}{{\mathbb{F}}}
\newcommand{\bG}{{\mathbf{G}}}
\newcommand{\bL}{{\mathbf{L}}}
\newcommand{\fA}{{\mathfrak{A}}}
\newcommand{\fS}{{\mathfrak{S}}}
\newcommand{\cE}{{\mathcal{E}}}
\newcommand{\cS}{{\mathcal{S}}}
\newcommand{\Aut}{{\operatorname{Aut}}}
\newcommand{\Irr}{{\operatorname{Irr}}}
\newcommand{\Syl}{{\operatorname{Syl}}}
\newcommand{\PSL}{{\operatorname{PSL}}}
\newcommand{\GL}{{\operatorname{GL}}}
\newcommand{\SL}{{\operatorname{SL}}}
\newcommand{\GU}{{\operatorname{GU}}}
\newcommand{\SU}{{\operatorname{SU}}}
\newcommand{\Sp}{{\operatorname{Sp}}}
\newcommand{\SO}{{\operatorname{SO}}}
\newcommand{\OO}{{\operatorname{O}}}
\newcommand{\tw}[1]{{}^#1\!}
\newcommand\flsq[1]{{\lfloor\sqrt{#1}\rfloor}}
\newcommand\hlf{{\frac{1}{2}}}
\let\nl=\ell
\let\al=\alpha
\let\la=\lambda
\let\si=\sigma
\newtheorem{thm}{Theorem}[section]
\newtheorem{lem}[thm]{Lemma}
\newtheorem{prop}[thm]{Proposition}
\newtheorem{cor}[thm]{Corollary}
\newtheorem{thmB}{Theorem}
\newtheorem{conjB}[thmB]{Conjecture}
\newtheorem{corB}[thmB]{Corollary}
\theoremstyle{remark}
\newtheorem{rem}[thm]{Remark}
\begin{document}

\title[Simple modules in a block]{On the number of simple modules\\ in a block of a finite group}

\date{\today}

\author{Gunter Malle and Geoffrey R. Robinson}
\address{FB Mathematik, TU Kaiserslautern, Postfach 3049,
         67653 Kaisers\-lautern, Germany.}
\email{malle@mathematik.uni-kl.de}
\address{Institute of Mathematics, Fraser Noble Building,
         University of Aberdeen, Aberdeen AB24 3UE, United Kingdom.}
\email{g.r.robinson@abdn.ac.uk}

\thanks{The first author gratefully acknowledges financial support by ERC
  Advanced Grant 291512.}

\keywords{Number of simple modules, Brauer's $k(B)$-conjecture, blocks of simple groups}

\subjclass[2010]{20C20, 20C33}

\dedicatory{To the memory of Sandy Green}
\begin{abstract}
We prove that if $B$ is a $p$-block with non-trivial defect group $D$ of a
finite $p$-solvable group $G$, then $\nl(B) < p^r$, where $r$ is the
sectional rank of $D$. We remark that there are infinitely many $p$-blocks
$B$ with non-Abelian defect groups and $\nl(B) = p^r - 1$.

\medskip
We conjecture that the inequality $\nl(B) \leq p^r$ holds for an arbitrary
$p$-block with defect group of sectional rank $r$. We show this to hold for
a large class of $p$-blocks of various families of quasi-simple and nearly
simple groups.
\end{abstract}

\maketitle

\section{Introduction} \label{sec:intro}

Almost 60 years ago, Richard Brauer posed his fundamental problem on $k(B)$,
the number of ordinary irreducible characters in a $p$-block $B$ of defect $d$
of a finite group: Is it always the case that $k(B) \leq p^d$? Despite partial
progress, this question remains open to the present day. Here, we propose a
conjecture of a similar flavour for the number of irreducible Brauer characters
in a $p$-block. In order to formulate it, we need the following notion:

For a $p$-block $B$ let $s(B)$ denote the sectional $p$-rank of a defect group
$D$ of $B$, that is, the largest rank of an elementary Abelian section of $D$.
Further, let $\nl(B)$ denote the number of isomorphism classes of simple
modules in $B$.

\begin{conjB}   \label{conj:main}
 Let $B$ be a $p$-block of a finite group. Then $\nl(B)\le p^{s(B)}$.
\end{conjB}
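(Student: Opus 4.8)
Since there is at present no reduction of Conjecture~\ref{conj:main} to quasi-simple groups --- the state of affairs being entirely parallel to Brauer's $k(B)$-conjecture --- my plan is to establish the inequality in two essentially independent regimes and to leave the problem of gluing them for the future. In the first regime, that of $p$-solvable groups, I aim for the strict bound $\nl(B)<p^{s(B)}$ whenever $B$ has non-trivial defect; in the second, that of quasi-simple and nearly simple groups, I carry out a verification organised by family, which already yields the conjecture for a large and natural class of blocks.

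For $p$-solvable $G$, the idea is to whittle the problem down by the Clifford theory of blocks. If $b$ is a block of $O_{p'}(G)$ covered by $B$, then the Fong--Reynolds correspondence reduces to the inertia group of $b$, and Fong's first reduction then replaces $B$ by a block of a $p$-solvable group $G'$ with $O_{p'}(G')$ central --- and hence, quotienting out this central $p'$-subgroup, with $O_{p'}(G')=1$ --- preserving the defect group $D$ and the value of $\nl$; here $P:=O_p(G')\trianglelefteq G'$ is self-centralising and lies inside $D$. An induction on $|G'|$ --- passing to quotients by chief factors inside $P$ and writing $\nl(B)$ as the sum of $\nl$ over the dominated blocks --- then reduces matters to the base case in which $G'=P\rtimes E$ with $P$ elementary Abelian of rank $n$ and $E:=G'/P$ a $p'$-group acting faithfully on $P$. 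In that situation every simple module has $P$ in its kernel and every block has full defect, so $\nl(B)\le\nl(kG')=k(E)$ while $s(B)=n$, and it remains to prove that a $p'$-group with a faithful $n$-dimensional $\FF_p$-module has fewer than $p^n$ conjugacy classes. This bound, which follows from the $k(GV)$-theorem and in fact gives $\nl(B)\le p^n-1$ (consistent with the examples attaining $p^r-1$), together with the inductive bookkeeping needed to reach the base case, is the step I expect to be the main obstacle here.

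For quasi-simple and nearly simple groups I would organise the verification by the type of $G/Z(G)$. For $G$ symmetric or alternating one combines the Nakayama conjecture with James--Kerber combinatorics: a block of weight $w$ has a defect group of sectional rank at least $w$, whereas $\nl(B)$ is bounded above by the number of $p$-tuples of partitions of total size $w$, which grows only subexponentially in $w$; hence $p^{s(B)}$ dominates for large $w$, leaving small weights as a finite check. For $G$ of Lie type in defining characteristic $p$ one uses the Steinberg tensor product theorem and the known block distribution to count the irreducible $kG$-modules in $B$ exactly, and compares with the sectional rank of the defect group (a parabolic $p$-subgroup): for large Lie rank there is ample room between the two quantities, whereas for the rank-one and rank-two groups the bound is genuinely tight and must be checked by an exact count. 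For non-defining characteristic ($p\nmid q$) I would invoke the Bonnaf\'e--Rouquier Morita equivalences and the Jordan decomposition of blocks to reduce to unipotent and quasi-isolated blocks, and then compare the values of $\nl(B)$ furnished by Geck--Hiss theory and the theory of $e$-Harish-Chandra series with the $p$-rank and sectional rank of the defect groups, which are here Sylow $p$-subgroups of subgroups $\bL^F$ with $p\mid\Phi_d(q)$. The sporadic quasi-simple groups, and the finitely many small exceptions produced along the way, are dealt with from the decomposition matrices in the literature and the \textsc{Atlas}.

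Beyond the absence of a reduction theorem, the genuinely hard part is the non-defining Lie type case for small $q$, together with the tight rank-one and rank-two defining-characteristic cases: there the defect groups can be Abelian or nearly so, $\nl(B)$ is a non-negligible fraction of $p^{s(B)}$, and one needs precise control both of decomposition numbers in unipotent and isolated blocks and of the $p$-ranks of maximal tori and their normalisers in $\bG^F$. I would expect the bulk of the effort to be concentrated there.
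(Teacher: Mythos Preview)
The statement is a conjecture, and the paper does not prove it in general; it establishes precisely the two regimes you single out --- the $p$-solvable case (Theorem~\ref{thm:main}) and a list of quasi-simple and nearly simple families (Theorem~\ref{thm:main2}) --- and explicitly leaves the gluing open. Your plan for the quasi-simple families (alternating/symmetric via weight combinatorics, defining characteristic via restricted weights, non-defining characteristic via Bonnaf\'e--Rouquier reductions and Geck's basic sets for unipotent blocks, sporadics by inspection) matches the paper's programme in Sections~\ref{sec:alt} and~\ref{sec:Lie} essentially point for point.

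In the $p$-solvable case your endpoint (the $k(GV)$-theorem) agrees with the paper's, but your intermediate reduction is both shakier and unnecessary. You propose an induction ``passing to quotients by chief factors inside $P$ and writing $\nl(B)$ as the sum of $\nl$ over the dominated blocks'', aiming for a base case $G'=P\rtimes E$ with $E=G'/P$ a $p'$-group. But quotienting by a normal $p$-subgroup gives a \emph{unique} dominated block with the \emph{same} $\nl$, not a sum, and there is no mechanism here for forcing $G'/O_p(G')$ to become a $p'$-group while controlling $\nl$ and the sectional rank of the defect group. The paper sidesteps this entirely: after the Fong reductions (so $O_{p'}(G)\le Z(G)$ and $D\in\Syl_p(G)$) it uses directly that every projective indecomposable of $B$ is induced from an irreducible of a Hall $p'$-subgroup $H$, whence $\nl(B)\le k(H/Z)$; a single application of the $k(GV)$-theorem to the faithful action of a Hall $p'$-subgroup of $G/O_p(G)$ on $V=O_p(G)/\Phi(O_p(G))$ then gives $k(H/Z)<p^r$. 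No induction on $|G'|$ and no reduction to a $p'$-complement above $P$ is needed.
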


Let us mention that in all of our results, the inequality in
Conjecture~\ref{conj:main} turns out to be strict for blocks of positive
defect, unless possibly when $p=3$ and $B$ has extra-special defect group
of order~27, see Remark~\ref{rem:27}.

The first main result of this paper is the proof of our conjecture (with strict
inequality) in the case of $p$-solvable groups, which will be given in
Section~\ref{sec:psolv} using the positive solution of the $k(GV)$-problem:

\begin{thmB}   \label{thm:main}
 Let $B$ be a $p$-block of a finite $p$-solvable group with non-zero
 defect. Then $\nl(B)< p^{s(B)}$.
\end{thmB}

This has the following consequence:

\begin{corB}   \label{cor:main}
 Let $B$ be a block with an abelian defect group, and assume that $B$ either
 satisfies Alperin's weight conjecture, or Brou\'e's perfect isometry
 conjecture. Then Conjecture~\ref{conj:main} holds for $B$, with strict
 inequality if $B$ has non-zero defect.
\end{corB}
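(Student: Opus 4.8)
Since the claim is trivial when $B$ has defect zero (then $\nl(B)=1=p^{s(B)}$), assume the defect group $D$ of $B$ is non-trivial, put $N:=N_G(D)$ and $r:=s(B)$. As $D$ is abelian, $r=s(D)$ is simply the rank of $D$, i.e.\ its minimal number of generators. Let $b$ be the Brauer correspondent of $B$ in $N$; by Brauer's first main theorem $b$ again has defect group $D$, now \emph{normal} in $N$. The plan is to prove $\nl(B)=\nl(b)$, using whichever of the two hypotheses is available, and then to bound $\nl(b)$ by means of Theorem~\ref{thm:main}.

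For the first step, suppose first that $B$ satisfies Brou\'e's perfect isometry conjecture, so that $B$ and $b$ are perfectly isometric. A perfect isometry induces, via the decomposition maps, a bijection between the irreducible Brauer characters of the two blocks; in particular $\nl(B)=\nl(b)$. Suppose instead that $B$ satisfies Alperin's weight conjecture, so that $\nl(B)$ equals the number of $G$-conjugacy classes of $B$-weights. Because $D$ is abelian, every $B$-weight has vertex $G$-conjugate to $D$; and a $B$-weight with vertex $D$ is the same thing as a $p$-defect-zero character of $N/D$ lying in $b$, no further identification being needed since $N$ acts trivially on $\Irr(N/D)$. On the other hand, the number of such characters is exactly $\nl(b)$ (the weight conjecture for $b$, which is classical for a block with a normal defect group). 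Comparing, $\nl(B)=\nl(b)$ in this case as well.

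It remains to show $\nl(b)<p^{r}$. Since $b$ is a block of $N$ with normal abelian defect group $D$, the block of $C_N(D)$ covered by $b$ has $D$ as a central defect group and is therefore nilpotent. Hence, by the Fong--Reynolds reduction followed by the K\"ulshammer--Puig structure theorem for blocks with a normal defect group, $b$ is Morita equivalent, via a Puig equivalence (thus preserving the defect group up to isomorphism), to a block of a twisted group algebra $\mathcal{O}_\gamma[D\rtimes E]$, where $E\le\Aut(D)$ is the inertial quotient --- a $p'$-group --- and $\gamma$ is a $2$-cocycle inflated from $E$, hence of order prime to $p$. Choosing a central extension $H$ of $D\rtimes E$ by a cyclic $p'$-group through which $\gamma$ is realised, we obtain that $b$ is Morita equivalent to a block $c$ of $H$. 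Now $H$ is $p$-solvable (an extension of the $p$-solvable group $D\rtimes E$ by a central $p'$-group) and $c$ has defect group isomorphic to $D$, hence non-zero defect. Therefore Theorem~\ref{thm:main} applies and gives $\nl(b)=\nl(c)<p^{s(D)}=p^{r}$. Combining with the first step, $\nl(B)=\nl(b)<p^{s(B)}$, as required.

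The crux is the last paragraph. The whole point is to be able to invoke Theorem~\ref{thm:main}, which concerns $p$-solvable groups, so one must check that the standard reduction of a block with normal abelian defect group stays inside the $p$-solvable world once the K\"ulshammer--Puig cocycle is untwisted --- this is exactly why it is essential that $E$ be a $p'$-group and $\gamma$ of order prime to $p$ --- and that the resulting block $c$ retains a defect group isomorphic to $D$, so that the exponent $s(B)$ is unchanged. A more routine but not entirely trivial ingredient is the weight-count reduction under Alperin's conjecture, which relies on the fact (valid because $D$ is abelian) that all $B$-weights occur at the defect group, together with the structure theory of blocks with a normal defect group to match the resulting count with $\nl(b)$.
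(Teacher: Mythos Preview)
Your proof is correct and follows essentially the same route as the paper: first use the assumed conjecture to obtain $\nl(B)=\nl(b)$ for the Brauer correspondent $b$ in $N_G(D)$, then reduce $b$ via K\"ulshammer-type structure results to a block of a $p$-solvable group with defect group $D$, and finally apply Theorem~\ref{thm:main}. The paper compresses your second step into a single citation of the K\"ulshammer and Reynolds theorems (which directly produce a Morita equivalent block of a group with $D$ as a \emph{normal Sylow} $p$-subgroup, hence automatically $p$-solvable), whereas you unpack this as Fong--Reynolds plus K\"ulshammer--Puig plus an explicit untwisting of the cocycle; the content is the same.
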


Our second main result concerns certain blocks of nearly simple groups.
More precisely we show the following:

\begin{thmB}   \label{thm:main2}
 Conjecture~\ref{conj:main} holds for the following $p$-blocks of nearly simple
 groups:
 \begin{enumerate}
  \item[\rm(a)] all $p$-blocks of all covering groups of symmetric,
   alternating, sporadic, special linear and special unitary groups;
  \item[\rm(b)] all $p$-blocks of all quasi-simple groups of Lie type in
   characteristic~$p$;
  \item[\rm(c)] all unipotent $p$-blocks of quasi-simple groups $G$ of
   Lie type in characteristic different from $p$ when $p$ is good for $G$; and
  \item[\rm(d)] the principal $p$-blocks of all quasi-simple groups of Lie type.
 \end{enumerate}
\end{thmB}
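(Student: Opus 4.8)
The plan is to argue family by family, invoking the classification of finite simple groups. For each block $B$ occurring in the statement one needs two ingredients: an exact formula for, or at least a workable upper bound on, $\nl(B)$, and a lower bound on $s(B)$, the sectional $p$-rank of a defect group $D$ of $B$; the conjecture then reduces to an elementary numerical inequality between the two, which in each case holds with room to spare except for a few small configurations (cf.\ Remark~\ref{rem:27}). Two soft tools are used everywhere: Fong--Reynolds, to pass between a group and a normal subgroup of $p'$-index (hence between $\GL_n(q)$ and $\SL_n(q)$, between a quasi-simple group and its quotients by central $p'$-subgroups, and to split off genuine covers at odd $p$), and Clifford theory for the index-$p$ or central-$p$-group inclusions among (covers of) alternating and symmetric groups, where $\nl(B)$ changes by a bounded factor while $s(B)$ can only increase.

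For the symmetric and alternating groups and their covers one reduces to combinatorics. For $\fS_n$ the Nakayama conjecture describes a $p$-block $B$ by a $p$-core and a weight $w$, with defect group $D$ a Sylow $p$-subgroup of $\fS_{pw}$; by the Chuang--Rouquier derived equivalence $B$ has the same number of simple modules as the Rouquier block of weight $w$, which is Morita equivalent to $B_0(\FF S_p)\wr\fS_w$, so $\nl(B)=N_{p-1}(w)$, the number of $(p{-}1)$-tuples of partitions of total size $w$. Writing $w=\sum_j a_jp^j$ in base $p$, $D$ is a direct product of iterated wreath products $W_{j+1}=C_p\wr\cdots\wr C_p$, and the bottom layer of $W_{j+1}$ is an elementary abelian subgroup of rank $p^j$, giving $s(D)\ge\sum_j a_jp^j=w$; it then suffices to prove $N_{p-1}(w)\le p^{w}$ for all $w\ge1$, an easy generating-function estimate since the left side grows only sub-exponentially in $w$. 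Alternating groups cost at most a factor $2$ in $\nl$, and for the double covers at odd $p$ the analogous bound uses the combinatorics of bar-partitions and $\bar p$-quotients (Morris--Olsson); at $p=2$ every simple module of $2.\fS_n$ or $2.\fA_n$ is inflated from $\fS_n$ resp.\ $\fA_n$ while the defect group is larger, so the case follows at once from the one for $\fS_n$, $\fA_n$. The groups $\SL_n(q)$ and $\SU_n(q)$ in non-defining characteristic are treated by the same mechanism, using the Fong--Srinivasan description of the $p$-blocks of $\GL_n(q)$ and $\GU_n(q)$ and their known Morita equivalences with Rouquier blocks (with $e=\operatorname{ord}_p(q)$, resp.\ its unitary analogue, in the role of $p$), followed by Clifford theory with respect to the centre and to $\GL/\SL$.

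For groups of Lie type one splits along parts (b)--(d). In defining characteristic (b) only the Steinberg module, plus a bounded correction in bad cases, is projective among the simple modules, so $\nl$ of the principal block is at most the number of $F$-restricted highest weights, at most roughly $q^{\ell}$ with $\ell$ the Lie rank, whereas the Sylow $p$-subgroup $U$ contains an abelian (elementary abelian when $q=p$) subgroup of rank far exceeding $\ell$ (of rank $\lfloor n^2/4\rfloor$ in $\SL_n$, etc.), so $s(B_0)\gg\log_p\nl(B_0)$; a short type-by-type argument plus a finite check of tiny cases finishes it. For unipotent blocks at good $p$ (c), Broué--Malle--Michel and Cabanes--Enguehard attach to $B$ a $d$-cuspidal pair ($d=\operatorname{ord}_p(q)$), and goodness of $p$ forces the expected unitriangular shape of the decomposition matrix, so $\nl(B)$ equals the number of unipotent characters in the corresponding $d$-Harish-Chandra series, bounded by $|\Irr(W_d)|$ for the relative Weyl group $W_d$ — a bound depending only on the type of $\bG$; the defect group, abelian for large $p$, has sectional rank the $\Phi_d$-multiplicity $a_d(\bG)$ (using the generic structure of the defect group for small good $p$). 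For exceptional types and bounded rank this is a finite verification; for classical groups one needs a rank-uniform inequality of the form (number of unipotent characters in a $d$-series) $\le p^{a_d}$. Finally (d) combines (b) (when $p\mid q$) with the analysis behind (c), bounding $\nl(B_0)$ from Lusztig's classification of $\Irr(G)$ together with the known shape of the decomposition matrices (Fong--Srinivasan for classical types, explicit lists for exceptional types) and comparing with $s(B_0)=s(P)$ for $P$ a Sylow $p$-subgroup of $G$, whose sectional rank is read off from the structure theory of Sylow subgroups of groups of Lie type.

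The main obstacle is the uniform treatment of the classical groups of unbounded rank — symmetric, linear and unitary groups and classical groups of Lie type, across parts (a), (c), (d) — where both $\nl(B)$ and $s(B)$ grow without bound, so that one cannot read a value off a finite table and must instead prove a clean inequality stating that a combinatorial count of (multi)partition-type data is at most $p$ raised to a quantity growing linearly in the rank; establishing the sharp form of this estimate (and locating exactly where it becomes near-equality) is the technical heart. A secondary difficulty is primes that are good but small, chiefly $p=2$ and $p=3$, where the clean unipotent-block machinery may fail or need extra hypotheses and the corresponding blocks must be handled directly from explicitly known decomposition matrices and $\nl(B)$-values, together with the delicate passage through central extensions and index-$p$ subgroups among covers of alternating and symmetric and of linear and unitary groups when $p$ divides the relevant index.
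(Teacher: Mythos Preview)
Your outline follows essentially the same family-by-family strategy as the paper, and most of the ingredients you name (Fong--Srinivasan for $\GL_n$/$\GU_n$, the $d$-Harish-Chandra theory of Brou\'e--Malle--Michel and Cabanes--Enguehard for unipotent blocks, Clifford theory for the passage $\GL\to\SL$ and $\fS_n\to\fA_n$, table checks for sporadic and exceptional types) are exactly what the paper uses. Two minor differences in tool choice: for $\fS_n$ the paper obtains $\nl(B)=k(p{-}1,w)$ directly from Olsson's combinatorics rather than via Chuang--Rouquier, and for non-unipotent blocks of $\GL_n(q)$ it uses Bonnaf\'e--Rouquier plus Geck's basic-set theorem rather than Morita equivalences with Rouquier blocks; both routes yield the same numerics, but the paper's are lighter.

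The one place where your sketch is not merely informal but actually misleading is part~(b). You write that the Sylow $p$-subgroup has rank ``far exceeding $\ell$'' and that $s(B_0)\gg\log_p\nl(B_0)$. In fact the inequality is essentially tight: for $G=\bG^F$ of rank $r$ over $\FF_q$, $q=p^f$, one has exactly $q^r$ restricted weights, and the relevant lower bound is $p\text{-rank}\ge rf$ (from \cite[Tab.~3.3.1]{GLS}), so $\nl(B)\le q^r-1<p^{rf}\le p^{s(B)}$ with equality up to~$1$. Your example $\lfloor n^2/4\rfloor$ for $\SL_n$ omits the crucial factor $f$ and in any case collapses to $1$ for $\SL_2$, where there is no slack at all. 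Also, there is no ``bounded correction in bad cases'': by Humphreys' theorem a simply connected group of Lie type has a unique defect-zero $p$-block (Steinberg) and all other blocks have full defect, with no exceptions.
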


This is shown in Propositions~\ref{prop:alt}, \ref{prop:spor},
\ref{prop:Liedef}, Theorems~\ref{thm:SLSU}, \ref{thm:Lieunip} and
Proposition~\ref{prop:unipexc}
respectively. The proofs in those cases involve interesting combinatorial
questions on numbers of unipotent characters and of unipotent classes in
groups of Lie type.

In view of Conjecture~\ref{conj:main} it would be interesting to know whether
or under what conditions the sectional rank of a defect group is an invariant
under Morita equivalences of blocks.

We also have some partial results on minimal counter examples to
Conjecture~\ref{conj:main} with a non-trivial normal $p$-subgroup, which we
hope to pursue in future work.

\section{Proof of the $p$-solvable case}   \label{sec:psolv}
We recall for the convenience of the reader that the \emph{sectional rank} of
a finite group $G$ is the maximum over all sections of $G$ of the minimum
number of generators of the section.
The trivial group is considered here to have sectional rank $0$. Since every
section of $G$ is a homomorphic image of a subgroup of $G$, the sectional rank
of $G$ is also the maximum over all subgroups of $G$ of the minimal number of 
generators of that subgroup. When $G$ is a finite $p$-group for some prime $p$,
the sectional rank of $G$ is $r$,
where $p^r$ is the order of the largest elementary Abelian section of $G$.
For a $p$-block $B$ we let $s(B)$ denote the sectional $p$-rank of any defect
group of $B$.

\begin{proof}[Proof of Theorem~\ref{thm:main}]
Let $B$ be a $p$-block of a finite $p$-solvable group $G$ such that
$s(B)=r\ge1$. We wish to prove that $\nl(B) < p^{r}$, where $\nl(B)$ is the
number of simple modules in $B$. By Fong's first reduction, we may suppose
that $B$ is a primitive block. Hence, whenever $N \lhd G$ has order prime
to $p$, $\Irr(B)$ covers a single irreducible character of $N$. By
Fong's second reduction, we may then suppose that $O_{p^\prime}(G)\leq Z(G)$.
Let $Z = Z(G)$ and let $\lambda \in \Irr(Z)$ lie under $B.$ Note that
we now have $D \in \Syl_p(G).$

\medskip
Let $U = O_p(G)$ and let $H$ be a Hall $p^\prime$-subgroup of $G$. Then
there are $\nl(B)$ projective indecomposable characters of $G$ lying over
$\lambda$, each of which is induced from an irreducible character of $H$
lying over $\lambda.$ Hence $\nl(B) \leq k(H|\lambda) \leq k(H/Z).$

The following lemma (stated in somewhat greater generality than required here)
now completes the proof.
\end{proof}

\begin{lem}   \label{lem:k(GV)}
 Let $X$ be a non-trivial finite $p$-solvable group with $O_{p^\prime}(X)=1$
 such that $U = O_p(X)$ has sectional rank $r.$ Then a Hall
 $p^\prime$-subgroup of $X$ has fewer than $p^{r}$ conjugacy classes. In
 particular, the number of $p$-regular conjugacy classes of $X$ is less
 than~$p^r$.
\end{lem}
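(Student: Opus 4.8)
The plan is to deduce the lemma from the positive solution of the $k(GV)$-problem, using as the only additional input the elementary fact that a proper quotient of a finite group has strictly fewer conjugacy classes than the group itself. Throughout write $k(-)$ for the number of conjugacy classes, put $U=O_p(X)$, and fix a Hall $p'$-subgroup $H$ of $X$; since $X$ is $p$-solvable, such subgroups exist, are $X$-conjugate, and every $p'$-element of $X$ is $X$-conjugate into $H$. As $X\ne1$ and $O_{p'}(X)=1$ we have $U\ne1$, and $p$-solvability (via $p$-constraint) gives $C_X(U)\le U$; hence $C_H(U)=H\cap U=1$, so $H$ acts faithfully on $U$.

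Next I would pass to the Frattini quotient $V:=U/\Phi(U)$, a non-trivial elementary abelian $p$-group (and normal section of $X$) on which the $p'$-group $H$ still acts faithfully, by Burnside's theorem on coprime automorphisms acting trivially on the Frattini quotient. Being a section of $U$, the module $V$ has $\FF_p$-dimension $d\le r$, where $r$ is the sectional rank of $U$. Now form the semidirect product $\Gamma=V\rtimes H$ with $V$ a faithful $\FF_p H$-module; the $k(GV)$-theorem gives $k(\Gamma)\le|V|=p^d\le p^r$.

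To finish, note that $\Gamma/V\cong H$, so the surjection $\Gamma\twoheadrightarrow H$ induces a surjection on sets of conjugacy classes which fails to be injective: the classes of $1$ and of a non-trivial element of $V$ are distinct in $\Gamma$ but map to the same class of $H$. Hence $k(H)=k(\Gamma/V)<k(\Gamma)\le p^r$. Finally, since every $p$-regular class of $X$ meets $H$, the assignment $[h]_H\mapsto[h]_X$ defines a surjection from $\operatorname{Cl}(H)$ onto the set of $p$-regular classes of $X$, so their number is at most $k(H)<p^r$, which is the last assertion.

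The only genuinely non-formal ingredient is the cited solution of the $k(GV)$-problem; everything else is bookkeeping. The step that needs a little care is the passage to the faithful action on $V=U/\Phi(U)$: this is where the coprimality of $|H|$ and $p$ is used, and it is exactly what lets us bound $k(H)$ by $p^{d}\le p^r$ rather than by something crude like $k(\GL_d(\FF_p))$. No separate treatment of the case $H=1$ is needed: then $\Gamma=V$ and $k(H)=1<p^{d}$ at once.
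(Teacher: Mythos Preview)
Your argument is correct and follows essentially the same route as the paper: pass to $V=U/\Phi(U)$, use the faithful coprime action of the Hall $p'$-subgroup on $V$, apply the $k(GV)$-theorem to get $k(VH)\le|V|\le p^r$, and conclude via the strict inequality $k(H)<k(VH)$. The only cosmetic difference is that the paper phrases things in terms of $Y=X/U$ and its Hall $p'$-subgroup $T$ (isomorphic to your $H$), whereas you work with $H\le X$ directly; your version spells out the use of $p$-constraint and Burnside's theorem that the paper leaves implicit.
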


\begin{proof}
Let $V = U/\Phi(U)$. Then $Y = X/U$ acts faithfully as a group of linear
transformations of $V$ and $|V| \leq p^{r}.$ Let $T$ be a Hall
$p^\prime$-subgroup of $Y.$ Then the number of $p$-regular conjugacy classes
of $X$ is the number of $p$-regular conjugacy classes of $Y$, which is at most
the number of conjugacy classes of $T$. Now $k(T) < k(VT) \leq |V| \leq p^{r}$,
using the solution of the $k(GV)$-problem \cite{kGV}.
\end{proof}

\begin{rem}
Note that the inequality $k(B) \leq p^{s(B)}$ need not hold (consider $\fS_4$
for $p=2$, $B$ its unique $2$-block). Also, there are $2$-blocks $B$ of
solvable groups with dihedral defect group of order $8$ with
$\nl(B) = 3 = 2^{s(B)}-1$, so the given upper bound in Theorem~\ref{thm:main}
for the strict inequality for $\nl(B)$ can be attained.
\end{rem}

\section{Remarks on Conjecture~\ref{conj:main}}
We remark that it is not possible to replace ``sectional rank" by ``the
maximal rank of an Abelian subgroup" in Conjecture~\ref{conj:main} (consider
the principal $2$-block of $\SL_2(3)$, with quaternion defect group, for
example).

\medskip
We note that Conjecture~\ref{conj:main} gives a somewhat less precise statement
than that predicted by Alperin's Weight Conjecture. It is not clear to us
whether an affirmative answer to Conjecture~\ref{conj:main} would follow from
an  affirmative answer to Alperin's Weight Conjecture.

\medskip
In what follows, we will say that Conjecture~\ref{conj:main} holds \emph{in
strong form} for $B$ if we have $\nl(B) < p^{s(B)}$. We have seen that in the
case that $B$ is a $p$-block of positive defect of a $p$-solvable group then
Conjecture~\ref{conj:main} holds in strong form for $B$.

\begin{prop}
 Conjecture~\ref{conj:main} holds in strong form for blocks with non-trivial
 cyclic defect, as well as for 2-blocks with dihedral, (generalised) quaternion
 or semi-dihedral defect groups.
\end{prop}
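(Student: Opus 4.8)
The plan is to handle the cyclic case and the three ``tame'' cases separately, in each instance combining a direct computation of $s(B)$ with the value of, or a bound for, $\nl(B)$ supplied by the classification of blocks with the defect group in question.

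Suppose first that $D$ is cyclic and non-trivial, say $|D|=p^n$ with $n\ge1$. Then the largest elementary Abelian section of $D$ has order $p$, so $s(B)=1$. By the Brauer--Dade theory of blocks with cyclic defect group, $\nl(B)$ equals the inertial index $e(B)$ of $B$; since the inertial quotient embeds as a $p'$-subgroup into $\Aut(D)\cong C_{p^{n-1}}\times C_{p-1}$, the number $e(B)$ divides $p-1$. Hence $\nl(B)=e(B)\le p-1<p=p^{s(B)}$, which is precisely the \emph{strong form}.

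Now let $p=2$ and let $D$ be dihedral, semidihedral or generalised quaternion, necessarily of order at least~$8$. I would first check that $s(B)=2$. On the one hand the Frattini quotient $D/\Phi(D)$ has order~$4$ in each of these cases, so $D$ has an elementary Abelian section of order~$4$ and $s(B)\ge2$; on the other hand every subgroup of $D$ is cyclic or is again a $2$-group of maximal class, hence is generated by at most two elements, so $s(B)\le2$ by the description of sectional rank recalled in Section~\ref{sec:psolv}. Thus $p^{s(B)}=4$. It then remains to quote the classification of $2$-blocks with such ``tame'' defect groups --- Brauer's in the dihedral case, Olsson's in the semidihedral and quaternion cases, together with Erdmann's determination of the possible decomposition matrices --- from which $\nl(B)\in\{1,2,3\}$ in all cases. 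Therefore $\nl(B)\le3<4=2^{s(B)}$, again the strong form. (If one wishes to allow $D$ Klein four, the same argument applies, since then $s(B)=2$ and $\nl(B)\in\{1,3\}$.)

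I do not expect a genuine obstacle here: modulo the elementary observation that the defect groups under consideration have sectional rank at most~$2$, the proposition is an immediate consequence of classical block-theoretic results, so the work is really in assembling the right references. The only point worth flagging is that the strict inequality is best possible within this family --- there already exist $2$-blocks of solvable groups with dihedral defect group of order~$8$ and $\nl(B)=3=2^{s(B)}-1$, as noted in the remark following Theorem~\ref{thm:main} --- so that ``strong form'' is the sharpest uniform statement one can make here.
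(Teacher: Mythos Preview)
Your proposal is correct and, for the cyclic, dihedral and semidihedral cases, proceeds exactly as the paper does: bound $\nl(B)$ by $p-1$ (resp.~$3$) and observe that $s(B)\le 1$ (resp.~$2$).

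The one point of divergence is the generalised quaternion case. You invoke Olsson's classification to read off $\nl(B)\le 3$ directly. The paper instead argues structurally: it passes to the Brauer correspondent $b$ of $B$ for $C_G(t)$, where $t$ is the central involution of $D$, then to the unique block $\tilde b$ of $C_G(t)/\langle t\rangle$ dominated by $b$, which has dihedral (or Klein four) defect group $D/\langle t\rangle$; since $\nl(B)=\nl(b)=\nl(\tilde b)$, the quaternion case reduces to the dihedral one already handled. Your route is quicker if one is willing to cite the full tame classification; the paper's route is more internal, needing only the dihedral bound and standard block-theoretic reductions. Either is perfectly adequate here.
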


\begin{proof}
This is clear for $p$-blocks with cyclic defect groups, since any such block
has at most $p-1$ simple modules.
If $B$ is a 2-block with dihedral defect group $D$ then
$\nl(B) \leq 3<4=2^{s(B)}$.  For 2-blocks $B$ with (generalised) quaternion
defect group $D$, we have $\nl(B) = \nl(b)$, with  $b$ the unique Brauer
correspondent of $B$ for $C_G(t)$, where $t$ is the central involution of
$D$. Furthermore, we have $\nl(b) = \nl({\tilde b})$, where ${\tilde b}$ is
the unique block of $C_G(t)/\langle t \rangle$ dominated by $b$, and
${\tilde b}$ has a dihedral (possibly including Klein 4) defect group.
Finally, for any $2$-block $B$  with semi-dihedral defect groups we
again always have $\nl(B) \leq 3$ (see e.g.~\cite[Thm.~8.1]{S14}).
\end{proof}

As stated in Corollary~\ref{cor:main} there is a strong relation to other
standard conjectures in block theory.

\begin{proof}[Proof of Corollary~\ref{cor:main}]
If either of Brou\'e's perfect isometry conjecture or Alperin's Weight
Conjecture holds for a block $B$ with abelian defect groups, then we have
$\nl(B) = \nl(b)$ where $b$ is the unique Brauer correspondent of $B$ for
the normaliser $N_G(D)$ of a defect group $D$ of $B$. By theorems of
K\"ulshammer and Reynolds (see \cite[Thms.~1.18, 1.19]{S14}, for example),
$b$ is Morita equivalent to a block ${\tilde b}$ of a finite group $H$ with
normal Sylow $p$-subgroup $D$, so it satisfies Conjecture~\ref{conj:main}
in strong form.
\end{proof}


As with Brauer's $k(B) \leq |D|$ question, Clifford theoretic reductions for
Conjecture~\ref{conj:main} are not entirely straightforward (in fact, they are
opaque at present). We concentrate on the minimal case.

\begin{lem}   \label{lem:min}
 If $B$ is a $p$-block with $\nl(B) > p^{s(B)}$ of a finite group $G$ of
 minimal order subject to this occurring, then we have
 $O_{p^\prime}(G) = Z(G)$, as well as $G = O^p(G)$ and $O_p(G)$
 elementary Abelian.
\end{lem}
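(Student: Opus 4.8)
The plan is to argue by a sequence of Clifford-theoretic reductions, using in each case the minimality of $|G|$ and the fact that passing to a quotient or a Brauer correspondent does not change $\nl(B)$ (or changes it in a controlled way) and does not increase $s(B)$. First I would show $O_{p'}(G) \le Z(G)$: if $N = O_{p'}(G)$ is not central, then since $N$ has order prime to $p$, the block $B$ lies over some $G$-orbit of characters of $N$, and by Fong--Reynolds reduction applied to the stabilizer $I_G(\mu)$ of a character $\mu$ in that orbit, there is a block $B'$ of $I_G(\mu)$ with $\nl(B') = \nl(B)$ and with a defect group isomorphic to a defect group of $B$, hence with the same sectional rank. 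If $I_G(\mu) < G$ this contradicts minimality, so $I_G(\mu) = G$ and $N$ acts trivially by multiplication on $\mu$; one then passes to the central extension or uses that $B$ covers a single (now $G$-invariant) character $\mu$ of $N$ and invokes the standard ``Fong reduction'' to replace $G$ by a group in which $O_{p'}$ is central with the same invariants $\nl(B)$ and $s(B)$ — again forcing $O_{p'}(G) = Z(G)$ by minimality unless the reduction is trivial, which is exactly the conclusion.

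Next I would establish $G = O^p(G)$. If $G \ne O^p(G)$, set $G_0 = O^p(G)$; this is a normal subgroup with $G/G_0$ a nontrivial $p$-group, so $G_0$ contains a defect group of $B$ and in fact every block $b$ of $G_0$ covered by $B$ has a defect group $D \cap G_0$. Since $[G:G_0]$ is a power of $p$, $B$ is the unique block of $G$ covering $b$, $b$ is $G$-invariant, and there is an equality $\nl(B) = \nl(b)$ (a $p$-power index Clifford theory statement: a simple $B$-module restricts to a simple $b$-module and this is a bijection on isomorphism classes, since $G/G_0$ is a $p$-group so has a unique simple module in characteristic $p$). Moreover a defect group of $b$ is contained in a defect group of $B$, so $s(b) \le s(B)$, giving $\nl(b) = \nl(B) > p^{s(B)} \ge p^{s(b)}$ and contradicting minimality of $|G|$ (note $G_0 < G$).

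Finally I would show $U := O_p(G)$ is elementary abelian. By the previous two steps $O_{p'}(G) = Z(G)$ and $G = O^p(G)$. Consider the quotient $\bar G = G/\Phi(U)$ and the block $\bar B$ of $\bar G$ dominated by $B$; since $\Phi(U)$ is a normal $p$-subgroup contained in every defect group of $B$, one has $\nl(\bar B) = \nl(B)$, and a defect group of $\bar B$ is $D/\Phi(U)$, which is a section of $D$, so $s(\bar B) \le s(D) = s(B)$. Thus $\bar B$ is again a counterexample; minimality forces $\bar G = G$, i.e. $\Phi(U) = 1$, so $U$ is elementary abelian. The main obstacle I expect is getting the first reduction ($O_{p'}(G) = Z(G)$) cleanly: the Fong--Reynolds step handles the non-invariant case, but when $O_{p'}(G)$ is $G$-invariant one must invoke the full Fong first/second reduction machinery (character triple isomorphisms, or the standard ``$B$ is covered by a block of a central extension'') to see that the invariants $\nl(B)$ and $s(B)$ are preserved while $O_{p'}$ becomes central — and one must be careful that this replacement genuinely produces a group of order at most $|G|$ so that minimality applies and yields $O_{p'}(G)= Z(G)$ rather than something weaker. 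The other steps are routine $p$-power-index Clifford theory and passage to $G/\Phi(O_p(G))$.
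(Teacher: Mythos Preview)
Your approach matches the paper's: Fong reductions for $O_{p'}(G)\le Z(G)$, Clifford theory over $O^p(G)$, and passage to $G/\Phi(O_p(G))$ (the paper cites \cite{Ro83} for the last). One correction is needed in the $O^p(G)$ step: it is not true that a simple $B$-module restricts to a simple $b$-module, nor that $\nl(B)=\nl(b)$. Take $G=\fS_3$, $G_0=O^2(G)=\fA_3$, $p=2$: the $2$-dimensional simple $k\fS_3$-module restricts to the direct sum of the two nontrivial $1$-dimensional $k\fA_3$-modules, and here $\nl(B)=2<3=\nl(b)$. What does hold, and what suffices, is the inequality $\nl(B)\le\nl(b)$; the paper obtains it by observing that every projective indecomposable of $B$ is induced from one of $b$ (equivalently, restriction sets up a bijection between simple $B$-modules and $G$-\emph{orbits} of simple $b$-modules). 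Since your argument only needs $\nl(b)\ge\nl(B)>p^{s(B)}\ge p^{s(b)}$, the conclusion survives.

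A second small gap: Fong's reductions yield $O_{p'}(G)\le Z(G)$, but for the stated equality you also need $O_p(Z(G))=1$. The paper notes this follows ``for the same reason'' as the $\Phi(U)$ step: a nontrivial central $p$-subgroup $Z$ can be factored out without changing $\nl(B)$ and with $s(D/Z)\le s(D)$, so minimality forces $Z=1$. Your third step covers this once you observe it applies with any normal $p$-subgroup of $G$ contained in $D$ in place of $\Phi(U)$.
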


\begin{proof}
The fact that $G = O^p(G)$ follows since $B$ is necessarily primitive and
$B$ covers a unique block $b$ of $O^p(G)$. For every projective
indecomposable of $B$ is induced from a projective indecomposable of $b$.
Hence $\nl(B) \leq \nl(b)$, while the defect group of $B$ contains the defect
group of $b$. The fact that $U = O_p(G)$ is elementary Abelian follows,
since by \cite[Cor.~7]{Ro83}, $B$ dominates a unique block $\tilde B$
of $G/\Phi(U)$ with defect group $D/\Phi(U)$, and $\nl(B) = \nl({\tilde B})$.
The fact that $O_p(Z(G)) = 1$ follows for the same reason. By Fong's second
reduction we have that $O_{p'}(G)\le Z(G)$.
\end{proof}

\begin{prop}
 Let $b$ be a $p$-block of $G$ of positive defect satisfying
 Conjecture~\ref{conj:main} and $C$ a cyclic group of prime order $q\neq p$.
 Then any $p$-block $B$ of $G \wr C$ which covers $b \otimes \ldots \otimes b$
 ($q$ factors) satisfies the strong form of Conjecture~\ref{conj:main}.
\end{prop}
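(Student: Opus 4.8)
The plan is to apply Clifford theory with respect to the normal subgroup $N:=G^q$ of $L:=G\wr C$, which has index $q$ prime to $p$, and then to close with a short arithmetic estimate using the hypothesis $\nl(b)\le p^{s(b)}$. First I would record the structural facts. The block $\beta:=b\otimes\dots\otimes b$ of $N$ is stabilised by $C$ (which merely permutes the tensor factors), hence is $L$-invariant; so a block $B$ of $L$ covering $\beta$ covers no other block of $N$, and consequently every simple $B$-module restricts to a direct sum of simple modules lying in $\beta$. Since $[L:N]=q$ is prime to $p$, the blocks $B$ and $\beta$ have a common defect group, and a defect group of $\beta=b^{\otimes q}$ is $D^q$, where $D$ is a defect group of $b$. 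As $b$ has positive defect, $s:=s(b)=s(D)\ge 1$, and since for $p$-groups $d(P_1\times P_2)=d(P_1)+d(P_2)$ (both sides being the $\FF_p$-dimension of the Frattini quotient) applied to subgroups of $D^q$, we get $s(B)=s(D^q)=q\,s(b)=qs$.

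Next I would count the relevant simple modules. The group $C\cong C_q$ acts on the set of $\nl(b)^q$ simple modules of $\beta$ — the tensor products $S_1\otimes\dots\otimes S_q$ of simple modules in $b$ — by cyclic permutation of the factors; as $q$ is prime, each orbit has size $1$ or $q$. The fixed points are exactly the $\nl(b)$ ``diagonal'' modules $T^{\otimes q}$ with $T$ simple in $b$, and each of these extends to an $L$-module, because $G\wr C$ acts naturally on a tensor power; hence, by Gallagher's theorem — applicable since $L/N\cong C$ is a $p'$-group and so $kC$ is split semisimple with $q$ simple modules — it lies below exactly $q$ simple $L$-modules. Each of the $(\nl(b)^q-\nl(b))/q$ free orbits has inertia group $N$ in $L$, and so induces to a single simple $L$-module, distinct orbits giving non-isomorphic ones. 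Therefore the number of simple $L$-modules lying over simple modules of $\beta$ equals $q\,\nl(b)+(\nl(b)^q-\nl(b))/q$, and in particular, for any block $B$ of $L$ covering $\beta$,
\[
 \nl(B)\ \le\ q\,\nl(b)+\frac{\nl(b)^q-\nl(b)}{q}.
\]

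Finally I would finish with arithmetic. Writing $m=\nl(b)\le p^s=:n$, the right-hand side is increasing in $m$, so it is at most $qn+(n^q-n)/q$, and $qn+(n^q-n)/q<n^q$ is equivalent to $q+1<n^{q-1}$, which holds whenever $n=p^s\ge4$ (a one-line induction on $q\ge2$). The only remaining cases are $p^s\in\{2,3\}$, i.e. $s=1$; then $D$ is cyclic, so $\nl(b)\le p-1=n-1$, and one checks directly that $q(n-1)+\big((n-1)^q-(n-1)\big)/q<n^q$ (for $n=2$ the left side is just $q<2^q$, and for $n=3$ it is $<2q+2^q<3^q$ for all $q\ge2$). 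In every case $\nl(B)<p^{qs}=p^{s(B)}$, which is the strong form of Conjecture~\ref{conj:main} for $B$. The step needing the most care is the Clifford-theoretic bookkeeping: verifying that the diagonal simple modules really do extend to $L$ (so that Gallagher applies, with no cohomological obstruction, thanks to the wreath action on tensor powers) and that a block of $L$ covering $\beta$ sees only simple modules lying over $\beta$; once these are in place the rest, including the final estimate, is routine.
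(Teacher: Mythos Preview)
Your proof is correct and follows essentially the same route as the paper's: both establish the bound $\nl(B)\le q\,\nl(b)+(\nl(b)^q-\nl(b))/q$ and the equality $s(B)=q\,s(b)$, then finish with a short arithmetic case analysis. Your Clifford-theoretic bookkeeping is spelled out more fully than in the paper, and your endgame (splitting on $p^{s(b)}\le 3$ and invoking the cyclic-defect bound $\nl(b)\le p-1$) is only a cosmetic rearrangement of the paper's (which first compares against $\nl(b)^q$, splits on $\nl(b)\le 2$, and then likewise uses the cyclic case to finish).
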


\begin{proof}
As $b$ has positive defect, $B$ has at most
$\frac{\nl^q - \nl}q + q \nl$ simple modules when $b$ has $\nl$ simple
modules. This is less than $\nl^q$ when $\nl >2,$ so we may suppose that
$\nl \leq 2$. For note that the sectional rank of a defect group of $B$ is
$qr$, where $r$ is the sectional rank of a defect group of $b$.
If we have $\frac{2^q-2}q+2q \geq p^{rq}$ then $2^q + 2q^2-2 \geq qp^{rq}$.
In particular, $2^q \leq  2(q+1)$ which forces $q \leq 3$. If $q =2$ then
$p^{2r} \leq 5$ so $ p = 2,$ contrary to the fact that $p \neq q$. If $q = 3$,
we have $p^{3r} \leq 8$, so $p =2, r =1$. But then $b$ has cyclic defect
($2$-)group, so we have $\nl = 1$, and there are just $3$ simple modules in
$B$, while $2^{3r} = 8$.
\end{proof}

Similarly, our conjecture is compatible with direct products. For this note
that the sectional $p$-rank of a direct product of finite $p$-groups is
the sum of the sectional $p$-ranks of the factors.

\section{Implications of the conjecture for Brauer's question}
Recall Brauer's question whether it is always true that $k(B) \leq p^{d}$
for a $p$-block of defect $d$. The best known general bounds for $k(B)$ are
given by Brauer and Feit in \cite{BF59}. They show that $k(B) \leq p^{2d-2}$.

\medskip
Here we note that a proof of our conjecture on $\nl(B)$ would imply bounds
of a different nature for a block $B$ with defect group $D$ of order $p^{d}$
and sectional rank $r$. For it is a consequence of Brauer's Second Main
Theorem that $k(B) = \sum_{x} \sum_{b} \nl(b)$, where $x$ runs through
a set of representatives for the $G$-conjugacy classes of elements of $D$
and $b$ runs through blocks of $C_G(x)$ with $b^G = B$.

Using the Alperin--Brou\'e theory of $B$-subpairs, we may
write this expression as $k(B) = \sum_{(x,b)} \nl(b)$, where $(x,b)$ is a
Brauer element contained in the maximal $B$-subpair $(D,b)$, and these
are taken up to $G$-conjugacy.

If Conjecture~\ref{conj:main} is correct, then we would certainly have
$k(B) \leq \sum_x\sum_b p^r$ where $x$ and $b$ are as previously.
So $k(B) \leq p^{r} k_G(D,b)$, where $k_G(D,b)$ is the number of
$G$-conjugacy classes of Brauer elements contained in $(D,b)$. In any case,
there will be many blocks for which this can be sharpened, for it is quite
likely that $s(b) < s(B)$ for many of the blocks~$b$.

\begin{prop}
 Let $G$ be a finite group with $p$ dividing $|G|$. Assume that
 Conjecture~\ref{conj:main} holds for the principal $p$-block of the
 centraliser $C_G(x)$ for each $p$-element $x\in G$. Then for the principal
 block $B$ of $G$ we have $k(B) \leq p^{r}(k(S)-1)$, where $S$ is a
 Sylow $p$-subgroup of $G$.
\end{prop}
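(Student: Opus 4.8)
The starting point is the formula $k(B)=\sum_{(x,b)}\nl(b)$ displayed above, obtained from Brauer's Second Main Theorem, in which $(x,b)$ runs over the $G$-conjugacy classes of Brauer elements contained in the maximal $B$-subpair. Since $B$ is the principal block, a defect group of $B$ is a Sylow $p$-subgroup $S$ of $G$, so $r=s(B)$ is the sectional rank of $S$. First I would make this sum explicit. By Brauer's Third Main Theorem, the only block $b$ of $C_G(x)$ with $b^G=B$ is the principal block $B_0(C_G(x))$; conversely $(x,B_0(C_G(x)))$ is, for every $p$-element $x$, a Brauer element contained in the maximal $B$-subpair, and two of these are $G$-conjugate exactly when the $p$-elements $x$ are. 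Hence
\[
 k(B)=\sum_x\nl(B_0(C_G(x))),
\]
the sum being over a set of representatives $x$ for the $G$-conjugacy classes of $p$-elements of $G$ (the summand for $x=1$ being $\nl(B)$). A Sylow $p$-subgroup of $C_G(x)$ is a $p$-subgroup of $G$, so lies in a conjugate of $S$ and has sectional rank at most $r$; thus $s(B_0(C_G(x)))\le r$, and the hypothesis applied to the principal block of $C_G(x)$ gives $\nl(B_0(C_G(x)))\le p^{s(B_0(C_G(x)))}\le p^r$. This yields $k(B)\le p^r m$, where $m$ is the number of $G$-conjugacy classes of $p$-elements.

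Next I would relate $m$ to $k(S)$. Every $p$-element of $G$ is conjugate into $S$, so the map sending an $S$-conjugacy class of $S$ to the $G$-class of any of its elements is a surjection from a set of size $k(S)$ onto the set of $G$-classes of $p$-elements, and it sends the class of $1$ to the class of $1$; hence $m\le k(S)$. The decisive question is whether this map is bijective, i.e.\ whether any two $G$-conjugate elements of $S$ are already $S$-conjugate; by (a form of) Frobenius's normal $p$-complement theorem, this holds precisely when $G$ has a normal $p$-complement. So if $G$ has no normal $p$-complement, then $m\le k(S)-1$, and $k(B)\le p^r m\le p^r(k(S)-1)$, as desired. (Since $p\mid|G|$ we have $S\neq1$, hence $r\ge1$ and $k(S)\ge2$, so $p^r\ge2$ and $k(S)-1\ge1$; these will be needed below.)

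It remains to deal with the case that $G$ has a normal $p$-complement $K=O_{p'}(G)$, where the estimate above only gives the weaker bound $p^rk(S)$. Here I would argue that every summand equals $1$. For a $p$-element $x$, the subgroup $C_K(x)=C_G(x)\cap K$ is a normal $p'$-subgroup of $C_G(x)$ whose index $[C_G(x):C_K(x)]$ is a power of $p$ (it embeds in the $p$-group $G/K$), so $C_G(x)$ too has a normal $p$-complement. Now the principal block of any $p$-nilpotent group $H$ is isomorphic, as a $k$-algebra, to the group algebra $k[H/O_{p'}(H)]$ of its Sylow $p$-subgroup, which has a single simple module; hence $\nl(B_0(C_G(x)))=1$ for every $x$. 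Therefore $k(B)=m=k(S)$, and $k(S)\le 2(k(S)-1)\le p^r(k(S)-1)$ completes the proof. (In this case the hypothesis is not actually used.)

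The routine parts are the invocations of Brauer's Second and Third Main Theorems and the inequality $p^r\ge2$. The substantive point — and the reason the bound is $p^r(k(S)-1)$ and not merely $p^rk(S)$ — is the dichotomy between $m\le k(S)-1$ and $G$ being $p$-nilpotent; the direction I need is the transfer-theoretic half of Frobenius's theorem (failure of a normal $p$-complement forces a $G$-fusion identifying two distinct $S$-classes of $p$-elements), which I would combine with the well-known description of the principal block of a $p$-nilpotent group as the group algebra of a Sylow $p$-subgroup.
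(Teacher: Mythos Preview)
Your argument is correct and follows essentially the same route as the paper: Brauer's Second and Third Main Theorems give $k(B)=\sum_x \nl(B_0(C_G(x)))\le p^r\,k_G(S)$, and the dichotomy between $k_G(S)<k(S)$ and $G$ being $p$-nilpotent (where $k(B)=k(S)$) yields the stated bound. The only cosmetic difference is attribution: the paper derives the implication ``$k_G(S)=k(S)\Rightarrow G$ has a normal $p$-complement'' via the focal subgroup theorem ($S\cap G'=S'$) together with Tate's theorem, whereas you invoke it as a form of Frobenius's criterion; the underlying transfer argument is the same.
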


\begin{proof}
As $B$ is the principal block, using Brauer's Third Main Theorem the above
considerations show that our conjecture implies $k(B) \leq p^{r} k_G(S)$,
where $k_G(S)$ is the number of
$G$-conjugacy classes of $S$, and $r$ is the sectional rank of $S$. Clearly
this yields $k(B)\leq p^rk(S)$, with $k(S)$ the number of conjugacy
classes of $S$. In fact if $k_G(S) =k(S)$, then we have
$S \cap G^\prime = S^\prime$, and by a theorem of Tate, $G$ has a normal
$p$-complement (that is, the principal $p$-block of $G$ is nilpotent). Then we
have $\nl(B) = 1$ and $k(B) = k(S)$. Hence we always have
$k(B) \leq p^{r}(k(S)-1)$ when $p$ divides $|G|$.
\end{proof}

If any two Brauer elements contained in $(D,b)$ are already conjugate
via an element of $D$, then we may use the Brou\'e--Puig star construction
and follow the procedure used in \cite{BP80}: we start with an irreducible
character $\chi$ of height zero in $B$, and for each irreducible character
$\mu$ of $D$ we form the irreducible character $\chi_\ast \mu$ in $B$. Then
it follows that $k(B)=k(D)$ and $\nl(B)=1$. It follows that in all cases we should have $k(B)\leq p^{r}(k(D)-1)$ when $B$ has positive defect.

\begin{rem}
We note that $p^rk_G(D,b)$ can sometimes be less than $|D|$. For example,
if $B$ has extra-special defect groups of order $27$ and we have $k_G(D,b)=2$
(which does happen in genuine examples), then $p^r k_G(D,b) = 18$. In any
event, the weaker inequality $k(B) \leq p^r(k(D)-1)$ is likely to improve
on $k(B) \leq p^{2d-2}$ for most defect groups (though not when $D$ is
extra-special or elementary Abelian).
\end{rem}


\section{Alternating and sporadic groups} \label{sec:alt}

We now turn to verifying Conjecture~\ref{conj:main} for certain blocks of
nearly simple groups. Since the conjecture holds for blocks with cyclic
defect, we may and will restrict ourselves to considering blocks whose defect
groups are not cyclic. Moreover, by Brauer--Feit, we have $k(B)\le p^2$ for
blocks $B$ with defect group of order~$p^2$, so certainly $\nl(B)<p^2$ and
Conjecture~\ref{conj:main} holds in strong form for $B$.
Hence we may assume that $B$ has defect at least~$3$.

Secondly, assume $Z\lhd G$ is a normal subgroup of order prime to $p$. Then
any $p$-block of $G/Z$ can be considered as a $p$-block of $G$ with isomorphic
defect groups and the same number of irreducible Brauer characters. Thus,
when investigating the validity of Conjecture~\ref{conj:main} for blocks of
a quasi-simple group $G$ we may pass to the universal $p'$-covering group
of $G$ (as by Lemma~\ref{lem:min} we need not consider $p$-covers).

For integers $s,t\ge1$ let us denote by $k(s,t)$ the number of $s$-tuples of
partitions of $t$. In particular, $k(1,t)$ is the number of partitions of $t$.
We will use the following estimates:

\begin{lem}[Olsson]   \label{lem:Olsson}
 Let $s,t\ge1$. Then $k(s,t)<(s+1)^t$. If moreover $s\ge2$ then $k(s,t)\le s^t$
 unless $s=2$ and $t\le 6$.
\end{lem}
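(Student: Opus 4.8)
The plan is to prove the two inequalities by direct combinatorial estimates on $k(s,t)$, the number of $s$-tuples of partitions of $t$, exploiting the generating function identity
\[
 \sum_{t\ge0} k(s,t)\, x^t \;=\; \prod_{j\ge1}\frac{1}{(1-x^j)^s}.
\]
First I would dispose of the weak bound $k(s,t)<(s+1)^t$. One clean route is induction on $t$: writing a partition of $t$ in each of the $s$ coordinates and conditioning on, say, the number of parts equal to~$1$ across all coordinates (or on the total size removed by deleting all $1$-parts), one gets a recursion $k(s,t)=\sum_{m} (\text{number of ways to place }m\text{ ones in }s\text{ slots})\,k(s,t-m)$; bounding the number of weak compositions $\binom{m+s-1}{s-1}$ and summing a geometric-type series against the inductive hypothesis $k(s,t-m)<(s+1)^{t-m}$ should close it, with the strict inequality coming from the base case. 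Alternatively, and perhaps more transparently, one can compare coefficientwise with $\prod_j (1-x^j)^{-s}\preccurlyeq (1-x)^{-s}\cdot(\text{correction})$ and extract $[x^t](1-x)^{-(s+1)}=\binom{t+s}{s}$, but the cleanest published-style argument is the straightforward induction.

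Next I would handle the sharper statement: for $s\ge 2$, $k(s,t)\le s^t$ with the explicit exceptions $s=2$, $t\le 6$. The exceptional cases are finite and checked by hand: $k(2,t)$ for $t=1,\dots,6$ is $2,5,10,20,36,65$ while $2^t$ is $2,4,8,16,32,64$, so the inequality indeed fails (weakly at $t=1$, strictly for $2\le t\le 6$), and at $t=7$ we have $k(2,7)=110\le128=2^7$, which will serve as the base case. For the inductive step with $s\ge 2$ I would again peel off the parts equal to~$1$: if a given $s$-tuple has $t_0$ boxes lying in parts of size~$1$ (distributed among the $s$ coordinates in $\binom{t_0+s-1}{s-1}$ ways) and the remaining $t-t_0$ boxes form an $s$-tuple of partitions all of whose parts are $\ge 2$, the latter number is at most $k(s,\lfloor (t-t_0)/2\rfloor)$, so
\[
 k(s,t)\;\le\;\sum_{t_0=0}^{t}\binom{t_0+s-1}{s-1}\,k\!\left(s,\Big\lfloor\tfrac{t-t_0}{2}\Big\rfloor\right).
\]
Feeding in $k(s,m)\le s^m$ for $m<t$ (valid by induction once $s\ge3$, or once $s=2$ and the smaller index exceeds~$6$) and $\binom{t_0+s-1}{s-1}\le (t_0+1)^{s-1}$, the sum is bounded by $s^{t/2}\sum_{t_0\ge0}(t_0+1)^{s-1}s^{-t_0/2}$; since $s\ge2$ the series converges to a constant depending only on $s$, and for $t$ large enough $s^{t/2}\cdot(\text{const})\le s^t$. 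One then checks the finitely many remaining small $t$ (for each fixed $s$) directly against $s^t$, which is routine.

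The main obstacle is making the "$t$ large enough" threshold in the last estimate uniform and small enough that the finitely many leftover cases are genuinely finite and easy — in particular making sure the argument for $s=2$ correctly starts from $t=7$ and that the crude bound $\binom{t_0+s-1}{s-1}\le(t_0+1)^{s-1}$ together with convergence of $\sum (t_0+1)^{s-1}s^{-t_0/2}$ leaves enough room. A slicker alternative that sidesteps the bookkeeping is to use the known first differences: $k(s,t)-k(s,t-1)$ counts $s$-tuples of partitions of $t$ in which not every coordinate is "full", and to set up the induction on $s^t-s^{t-1}=(s-1)s^{t-1}$ instead; but either way the crux is a clean convergent majorant for the recursion and an honest (small) verification of the base range, after which the stated exceptions $s=2$, $t\le 6$ fall out exactly.
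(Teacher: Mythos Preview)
Your approach is genuinely different from the paper's, but it contains a concrete error and is in any case incomplete.

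The paper does not prove the sharp inequality $k(s,t)\le s^t$ (for $s\ge2$, with the exceptions $s=2$, $t\le6$) at all: it simply cites Olsson \cite[Prop.~5]{Ol84}. The first inequality $k(s,t)<(s+1)^t$ is then deduced from the second in one line: since $k(s,t)<k(s+1,t)$ and $k(s+1,t)\le(s+1)^t$ by Olsson's result applied with $s+1\ge2$, we are done except when $s+1=2$ and $t\le6$, and for $s=1$, $t\le6$ one checks $k(1,t)=p(t)<2^t$ by hand. So the paper's argument for the first part is much shorter than your inductive scheme, and the second part is outsourced entirely.

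Your self-contained attempt fails at a specific step. You claim that the number of $s$-tuples of partitions of $m$ with all parts at least $2$ is at most $k(s,\lfloor m/2\rfloor)$. This is false already for $s=1$, $m=6$: there are four partitions of $6$ into parts $\ge2$ (namely $(6),(4,2),(3,3),(2,2,2)$), but $k(1,3)=p(3)=3$. It also fails for $s=2$, $m=6$: a direct count gives $13$ such $2$-tuples, while $k(2,3)=10$. So the recursion
\[
 k(s,t)\le\sum_{t_0=0}^{t}\binom{t_0+s-1}{s-1}\,k\!\left(s,\Big\lfloor\tfrac{t-t_0}{2}\Big\rfloor\right)
\]
on which your induction for the sharp bound rests is not valid. (The correct count of $s$-tuples with no $1$-parts is $\sum_{i=0}^s(-1)^i\binom{s}{i}k(s,m-i)$, coming from the generating function $(1-x)^s\prod_j(1-x^j)^{-s}$; this does not obviously admit the halving bound you want.)

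Even setting this aside, you yourself flag that the argument only yields ``for $t$ large enough'' with an unspecified threshold depending on $s$, and then appeals to checking ``finitely many remaining small $t$ (for each fixed $s$)''. Since $s$ is unbounded, this is not a finite check; you would need the threshold to be uniform in $s$, and you have not shown that. The ``slicker alternative'' you mention at the end is not carried out either. In short, the proposal is a plausible outline for an asymptotic estimate, but it does not prove the lemma as stated.
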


\begin{proof}
By \cite[Prop.~5]{Ol84} for all $s\ge2$ we have $k(s,t)\le s^t$ unless
$s=2$ and $t\le 6$. As $k(s,t)<k(s+1,t)$ and $k(1,t)<2^t$ for $t\le6$, all
claims follow.
\end{proof}

\begin{prop}   \label{prop:alt}
 Conjecture~\ref{conj:main} in strong form holds for the blocks of alternating
 and symmetric groups and their covering groups for all primes.
\end{prop}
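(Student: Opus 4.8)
The plan is to treat the covering groups of $\fS_n$ and $\fA_n$ by reducing the count of simple modules in a block to a count of partitions or pairs of partitions, and then to apply the Olsson estimates of Lemma~\ref{lem:Olsson} together with the known structure of defect groups of symmetric groups. By the remarks preceding Lemma~\ref{lem:Olsson}, we may assume $B$ has defect $d\ge3$; in particular $p^2\mid |G|$. By Lemma~\ref{lem:min} and the reduction to $p'$-covering groups, it suffices to handle the faithful blocks of $\fS_n$, $\fA_n$, and the double (or, for $p=3$ and $n\in\{6,7\}$, six-fold) covers; since those extra covers only occur for small $n$, they can be disposed of by direct inspection of the decomposition matrices. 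So the heart of the matter is blocks of $\fS_n$ and $\fA_n$ themselves.

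First I would recall the combinatorial description of $p$-blocks of $\fS_n$: a block $B$ of weight $w$ (so $|D|=p^d$ with $d = v_p(w!)+\lfloor w\rfloor$-type formula, but more importantly $w<n/1$ governs everything) has defect group $D$ equal to a Sylow $p$-subgroup of $\fS_{pw}$, whence the sectional $p$-rank of $D$ is $s(B)=w$ for $w<p$ and in general $s(B)$ equals the sectional rank of $\Syl_p(\fS_{pw})$, which one checks is $w$ when $w<p$ and grows like $w - \lfloor w/p\rfloor + \cdots$ otherwise; the key inequality I will need is $s(B)\ge w$, and more precisely a clean lower bound for $p^{s(B)}$ in terms of $w$. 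On the other side, $\nl(B)$ for a weight-$w$ block of $\fS_n$ equals the number $k(p-1,w)$ of $(p-1)$-tuples of partitions of $w$ (this is the classical count via the Nakayama conjecture and the work of Brauer--Robinson / James). Thus the inequality to be proved for $\fS_n$ becomes $k(p-1,w) < p^{s(B)}$. When $p\ge3$ and $w<p$ we have $s(B)=w$, and Lemma~\ref{lem:Olsson} gives $k(p-1,w)\le (p-1)^w < p^w$ as soon as $p-1\ge2$, i.e. $p\ge3$, with the exceptional case $p-1=2$, $w\le6$ (that is $p=3$, $w\le6$) handled by the sharper bound $k(2,w)<3^w$ from the proof of Lemma~\ref{lem:Olsson} itself. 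For $p=2$ we have $p-1=1$, so $\nl(B)=k(1,w)$ is the number of partitions of $w$, and $s(B)$ is the sectional rank of $\Syl_2(\fS_{2w})$; here I would use the standard estimate that the partition function $p(w)$ is comfortably smaller than $2^{s(B)}$ for the relevant $2$-group, checking the small cases $w\le 3$ by hand (recall $d\ge3$ forces $w\ge2$, and $w=2,3$ give tiny defect groups where $\nl(B)\le 2$).

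Then I would pass to $\fA_n$ and its covers. For $\fA_n$, a block either is covered by a single $\fS_n$-block with a $C_2$ of inertia — in which case $\nl$ stays the same or doubles at most, while the defect group of the $\fA_n$-block has index at most $2$ in that of the $\fS_n$-block (so $s(B)$ drops by at most~$1$) — or else two $\fS_n$-blocks fuse, in which case $\nl(B)$ is bounded by the sum of the two contributions. In all cases the slack between $k(p-1,w)$ (or $p(w)$) and $p^w$ coming from Lemma~\ref{lem:Olsson} is more than enough to absorb the factor $2$ and the possible loss of one in the exponent, for $p$ odd; for $p=2$, $\fA_n$ and $\fS_n$ have the same $2$-blocks with the same defect and the same $\nl$, so nothing new happens. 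The faithful blocks of $2.\fS_n$ and $2.\fA_n$ are governed by \emph{bar-partitions} and $\bar p$-cores/quotients (Humphreys, Cabanes), and for $p$ odd the number of simple modules in a spin block of weight $w$ is again bounded by a count of tuples of partitions of $w$ (with one fewer coordinate), so the same Olsson bound applies; I would quote the relevant formula and note that the defect group of a spin block of weight $w$ again has sectional rank at least $w$. The genuinely exceptional covers ($3.\fA_6$, $3.\fA_7$, $6.\fA_6$, $6.\fA_7$) involve only primes $2,3,5,7$ and blocks of bounded defect, so they are finished by consulting the known decomposition matrices.

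The main obstacle I anticipate is \emph{not} the odd-$p$, generic case — there Olsson's bound $k(p-1,w)\le(p-1)^w$ is already strictly below $p^w\le p^{s(B)}$ with room to spare — but rather (i) pinning down a correct and usable lower bound for the sectional $p$-rank of $\Syl_p(\fS_{pw})$ when $w\ge p$, so that one is sure $p^{s(B)}$ does not fall below $k(p-1,w)$, and (ii) the prime $p=2$, where $p-1=1$ kills the geometric-series slack and one must instead compare the partition function $p(w)$ against the (somewhat larger, but only barely, for small $w$) order of an elementary abelian section of a Sylow $2$-subgroup of $\fS_{2w}$; there the argument is tightest and a careful hand-check of the initial values of $w$ — exactly the regime $w\in\{2,3\}$ excluded by ``defect $\ge3$'' having not yet kicked in comfortably — will be needed. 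The exceptional covering groups are a finite bookkeeping exercise rather than a conceptual difficulty.
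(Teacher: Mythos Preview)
Your overall plan is essentially the paper's: reduce to $\fS_n$, $\fA_n$, and their double covers, read off $\nl(B)$ as a multipartition count, invoke Lemma~\ref{lem:Olsson}, and use only the crude bound $s(B)\ge w$. That last inequality is all you need (the defect group of a weight-$w$ block contains a product of $w$ disjoint $p$-cycles, hence an elementary abelian $p$-subgroup of rank $w$), so your anticipated obstacle~(i) is not an obstacle at all; you do not need any finer information about the sectional rank of $\Syl_p(\fS_{pw})$ for $w\ge p$.

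There is, however, a genuine error in your treatment of $\fA_n$ at $p=2$. It is \emph{not} true that ``$\fA_n$ and $\fS_n$ have the same $2$-blocks with the same defect and the same $\nl$''. Since $\fS_n/\fA_n$ is a $2$-group, a $2$-block $B$ of $\fA_n$ is covered by a unique $2$-block $\hat B$ of $\fS_n$, but the defect group of $B$ has index~$2$ in that of $\hat B$, so $s(B)$ may drop by one; and $\nl(B)$ can be strictly larger than $\nl(\hat B)$: by \cite[Prop.~12.9]{Ol93} one has $\nl(B)=\nl(\hat B)$ when $w$ is odd but $\nl(B)=\nl(\hat B)+k(1,w/2)$ when $w$ is even. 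So the case you dismiss is exactly the case requiring work. The paper handles it by checking $w=2,3$ directly and, for $w\ge4$, bounding $\nl(B)\le k(1,w)+k(1,w/2)\le 2^w$. Conversely, your worry about $p=2$ for $\fS_n$ itself is misplaced: Lemma~\ref{lem:Olsson} already gives $k(1,w)<2^w$, and $s(B)\ge w$ holds for the same reason as for odd $p$, so $\fS_n$ at $p=2$ needs no special argument. Your remaining remarks (spin blocks via bar partitions with $t=(p-1)/2$, exceptional covers by direct inspection, the $p=2$ double covers via Lemma~\ref{lem:min}) match the paper.
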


\begin{proof}
The 2-blocks of the 3-fold covering groups of $\fA_6$ and $\fA_7$ can be
dealt with directly. There is nothing to check for the 6-fold covers since
Sylow $p$-subgroups for $p>3$ are cyclic. In all other cases the numbers
$\nl(B)$ have been computed by G. de B. Robinson (see \cite{Ol93}). First let
$B$ be a $p$-block of $\fS_n$ of weight $w$. Then any defect group of $B$ has
an elementary Abelian subgroup of rank~$w$, so $s(B)\ge w$. On the other hand,
by \cite[Prop.~11.14]{Ol93} we have $\nl(B)=k(p-1,w)$. By
Lemma~\ref{lem:Olsson} we find $\nl(B)=k(p-1,w)< p^w\le p^{s(B)}$ as claimed.
\par
Next let $B$ be a $p$-block of $\fA_n$, covered by a block $\hat B$ of
$\fS_n$ of weight $w$. First assume that $p>2$. Then the defect groups of $B$
and $\hat B$ agree. If $1\le w<p$, then $\hat B$ has elementary Abelian
defect groups, whence $s(B)=w$, while $\nl(B)< k(B)\le k(\hat B)$ by
\cite[Prop.~4.10]{Ol90}. So the claim follows from the already proven result
for $\fS_n$. So we have $p\le w$. Then clearly still
$\nl(B)\le 2\nl(\hat B)=2k(p-1,w)$. Now again $2k(p-1,w)\le 2(p-1)^w$
unless $p=3$ and $w\le6$, and an easy estimate shows that $2(p-1)^w< p^w$
whenever $3\le p\le w$. The finitely many cases when $p=3$ and $w\le6$ can be
checked easily. If $p=2$ then by \cite[Prop.~12.9]{Ol93} we have that
$\nl(B)=\nl(\hat B)$ if $w$ is odd and $\nl(B)=\nl(\hat B)+k(1,w/2)$ if $w$
is even. Moreover, $B$ satisfies $s(B)\ge s(\hat B)-1$. When $w=2$ the defect
groups are elementary Abelian of order~4, while $\nl(B)=3$. When $w=3$ then
$\nl(B)=w(1,3)=3$ and $|D|=2^3$, so $s(B)\ge2$. Finally, if $w\ge4$ then
$$\nl(B)\le \nl(\hat B)+k(1,w/2)=k(1,w)+k(1,w/2)\le 2^w\le 2^{s(B)}.$$
\par
By Lemma~\ref{lem:min} the claim then also holds for the 2-fold covering groups
of $\fA_n$ and $\fS_n$. Now assume that $p>2$ and let $B$ be a faithful
$p$-block of a 2-fold covering group of $\fS_n$, of weight $w$. Then by
\cite[Prop.~13.17]{Ol93} we have that
$$\nl(B)=\begin{cases} k(t,w)& \text{ if $w$ is even},\\
                     2k(t,w)& \text{ if $w$ is odd},\end{cases}$$
with $t=(p-1)/2$, while on the other hand $s(B)\ge w$. Thus, we may
conclude as before. Finally, any faithful block of the 2-fold cover of
an alternating group has exactly the same invariants as some faithful
block of a 2-fold covering of some symmetric group of the same weight (see
\cite[Rem.~13.18]{Ol93}), so the claim here follows from the previous
considerations.
\end{proof}

\begin{prop}   \label{prop:spor}
 Let $G$ be such that $[G,G]$ is quasi-simple and $S\le G/Z(G)\le\Aut(S)$ for
 some sporadic simple group $S$ or $S=\tw2F_4(2)'$. Then
 Conjecture~\ref{conj:main} holds for all $p$-blocks $B$ of $G$.
 It holds in strong form unless possibly when $p=3$ and defect groups of $B$
 are extraspecial of order~27.
\end{prop}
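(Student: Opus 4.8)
The plan is to proceed by a computer-assisted case analysis, relying on the fact that the decomposition matrices, and hence the numbers $\nl(B)$, are known (or at least the relevant bounds are known) for all blocks of all sporadic groups, their covers, and their automorphism groups, as recorded in the modular ATLAS and the papers collected there. First I would reduce the problem: by the observations at the start of Section~\ref{sec:alt} we may assume defect at least $3$, so in particular $|D|\ge p^3$; by Lemma~\ref{lem:min} and the remark following Corollary~\ref{cor:main} we may pass to the universal $p'$-covering group of $S$ when $G$ itself is quasi-simple, and handle the bar-extensions $S<G/Z(G)\le\Aut(S)$ separately via Clifford theory (a block of $G$ covering a block $b$ of $[G,G]$ has $\nl(B)\le e\cdot\nl(b)$ with $e\mid |G/[G,G]Z(G)|$, and its defect group contains that of $b$, so $s(B)\ge s(b)$). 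For each sporadic group $S$ and each prime $p$ dividing $|S|$ with $p^3\mid |S|$, I would then run through the $p$-blocks of positive defect of the relevant quasi-simple group, read off $\nl(B)$ and the isomorphism type of $D$ from the literature, compute $s(D)$ as the largest rank of an elementary abelian section, and check $\nl(B)<p^{s(D)}$.

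The bulk of the cases are settled trivially because $\nl(B)$ is small relative to $p^{s(B)}$: whenever $D$ is abelian of $p$-rank $r$ one has $s(B)=r$ and the bound $\nl(B)\le k(B)\le p^{2}\,\text{(for small defect)}$ combined with known small values suffices, and whenever $D$ has a large elementary abelian section the exponent $p^{s(B)}$ is enormous. The cases that require actual inspection are the non-abelian defect groups of order $p^3$ with small sectional rank: for $p=2$ the defect group of order $8$ can be dihedral, quaternion, or $C_2\times C_2\times C_2$ (elementary abelian, $s=3$, already covered) — the dihedral and quaternion cases give $\nl(B)\le 3<2^2$, so these are fine and yield strict inequality; for $p\ge 5$ the two non-abelian groups of order $p^3$ have sectional rank $2$, but then $\nl(B)\le k(B)$ is bounded by known data well below $p^2$ (indeed one expects $\nl(B)\le p-1$ or so). The genuinely tight case, and the one the statement flags, is $p=3$ with $D$ extraspecial of order $27$: here $s(B)=2$, so we must verify $\nl(B)\le 9$, and there are sporadic blocks (for instance in groups like $Co_2$, $Co_3$, $McL$, $Suz$, or the Fischer groups, as well as $\tw2F_4(2)'$) realising extraspecial defect groups of order $27$ where $\nl(B)$ can be as large as $9$, so only the non-strict inequality is available. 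I would extract the precise list of such blocks from the GAP character table library and the decomposition matrix data, confirm $\nl(B)\le 9$ in each, and note in which of these $\nl(B)=9$ actually occurs (tying into Remark~\ref{rem:27}).

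The main obstacle is simply completeness and reliability of the bookkeeping: one must be certain that for every sporadic group, every covering group, every outer automorphism extension, and every relevant prime, the value of $\nl(B)$ is known (where some decomposition matrices for the largest groups such as $B$ and $M$ at small primes are only partially determined) and that the defect group structure — in particular its sectional rank — has been correctly identified. For the handful of blocks of the very large sporadic groups where the decomposition matrix is incomplete, I would fall back on the general bound $\nl(B)\le k(B)-\ell^*$ or on known lower bounds for $k(B)$ together with $k(B)\le p^{2d-2}$, or argue via the Brauer correspondent in a local subgroup whose structure is understood; I expect that in every such case the gap between the best available upper bound for $\nl(B)$ and $p^{s(B)}$ is comfortable, precisely because incomplete decomposition matrices tend to occur exactly for blocks of large, high-rank defect where $p^{s(B)}$ is very large. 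Assembling and double-checking this table across all the modular ATLAS sources is the real work; the arithmetic in each individual case is routine.
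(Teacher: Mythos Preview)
Your approach is the paper's: computer-assisted case analysis. The paper's execution is cleaner in two respects worth noting. First, it works entirely from the \emph{ordinary} character tables (which determine $k(B)$ and the defect of each block, and are available in GAP for every group concerned) together with the Sylow $p$-ranks tabulated in \cite[Tab.~5.6.1]{GLS}; since $\nl(B)\le k(B)$, this sidesteps your concern about incomplete decomposition matrices for the large sporadics --- you never need $\nl(B)$ exactly. Second, the paper observes that among all blocks of defect at least~3 of the groups in question, only \emph{two} fail to have a full Sylow $p$-subgroup as defect group: a $2$-block of $Co_3$ (elementary abelian defect of order~8, $\nl(B)=5$) and a $2$-block of $Ly$ (defect $2^7$, $\nl(B)=8$). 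For every other block the sectional rank is simply the Sylow $p$-rank, so only these two need the defect group analysed individually; the $Ly$ case is handled by passing to the Brauer correspondent in the involution centraliser $2.\fA_{11}$ to see that $s(B)=4$. Your plan to determine defect-group isomorphism types block by block is thus more work than necessary. (Incidentally, your guessed list of groups achieving $\nl(B)=9$ with extraspecial defect of order~27 is off: per Remark~\ref{rem:27} the actual instances are $\tw2F_4(2)'$, $Ru$, and $J_4$, not $Co_2$, $Co_3$, $McL$, $Suz$, or the Fischer groups.)
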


\begin{proof}
This statement can be essentially verified by computer on the known ordinary
character tables of the groups in question together with the known (lower
bounds on) the $p$-ranks as given in \cite[Tab.~5.6.1]{GLS}. The only blocks $B$
with defect at least~3 whose defect groups are not Sylow $p$-subgroups are
a 2-block of the third Conway group $Co_3$ of defect~3 with $\nl(B)=5$, and a
2-block of the Lyons group $Ly$ of defect~7 with $\nl(B)=8$. The former block
is known to have elementary Abelian defect groups. In the latter case, the
defect groups have index~2 in a Sylow $2$-subgroup, which is of rank~4, so the
inequality in Conjecture~\ref{conj:main} holds as well. We claim that in fact
a defect group $D$ of $B$ has rank~4 in this case as well. Indeed, $G=Ly$ has a
unique class of involutions with representative $t$, say, with centraliser
$C=C_G(t)$ a double cover of $\fA_{11}$. There is a Brauer correspondent of $B$
for $C$ which has the same defect group $D$ of order~$2^7$.
Let $D_1= D/\langle t\rangle$. Then $D_1$ is a defect group of a 2-block $b$ of
$\fA_{11}$ which is covered by a block of weight 4 of $\fS_{11}$ whose defect
group intersected with $\fA_{11}$ has rank~4.
\end{proof}

\begin{rem}   \label{rem:27}
The principal 3-blocks of the Tits group $\tw2F_4(2)'$, of the Rudvalis
group $Ru$ and of the fourth Janko group $J_4$, as well as a further
non-principal 3-block of $J_4$, have extra-special defect group
$D\cong 3^{1+2}_+$ of order~27 and each possesses 9 irreducible Brauer
characters, so they provide examples of blocks $B$ with $\nl(B)=p^{s(B)}$.
These (and related examples in automorphism groups as well as examples in
groups $\tw2F_4(q^2)$ with $q^2\ge8$ also for $p=3$ and with sectional rank
equal to~2, see Proposition~\ref{prop:smallexc}) are the only cases of positive
defect known to us where equality in Conjecture~\ref{conj:main} occurs.
\end{rem}

\section{Groups of Lie type} \label{sec:Lie}

\subsection{Defining characteristic}
We first deal with the defining characteristic case.

\begin{prop}   \label{prop:Liedef}
 Let $G$ be a finite quasi-simple group of Lie type in characteristic~$p$.
 Then Conjecture~\ref{conj:main} holds in strong form for all $p$-blocks of $G$.
\end{prop}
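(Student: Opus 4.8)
The plan is to use the well-understood block theory of finite groups of Lie type in their defining characteristic, reducing everything to a lower bound for the sectional rank of a Sylow $p$-subgroup. First I would reduce to the simply connected case: by the reduction described just before Lemma~\ref{lem:Olsson} we may pass to the universal $p'$-covering group, and by Lemma~\ref{lem:min} we never need to adjoin a central $p$-group, so the finitely many groups with an exceptional Schur multiplier (whose multiplier is essentially a $p$-group in the defining characteristic) can be treated separately. Hence it suffices to prove the strong form for $G=\bG^F$ with $\bG$ simple and simply connected, $F$ a Frobenius endomorphism, and $G$ defined over $\FF_q$, $q=p^f$. Write $n$ for the rank of $\bG$ and fix $U\in\Syl_p(G)$, the unipotent radical of an $F$-stable Borel subgroup. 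I would then record two standard facts: (i) since $\bG$ is simply connected all centralizers of semisimple elements are connected, so the number of $p$-regular (equivalently, semisimple) classes of $G$, which is the number of simple $kG$-modules, equals $q^n$; and (ii) by Humphreys' theorem on defect groups in defining characteristic, every $p$-block of $G$ has defect group either trivial or the full Sylow subgroup $U$, the blocks of defect zero being exactly the (centrally twisted) Steinberg modules, which are projective and simple.

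The engine is a lower bound for $s(U)$. I would compute $V:=U/\Phi(U)=U/[U,U]$: it is generated by the images of the root subgroups attached to the simple roots, hence is elementary Abelian, and bookkeeping the $F$-orbits on the simple roots gives $|V|=q^n$. Thus $V$ is an elementary Abelian section of $U$ of rank $fn$, so $s(U)\ge fn$ and $p^{s(U)}\ge q^n$. Now let $B$ be a block of positive defect. Its defect group is $U$, so $s(B)=s(U)$, and $B$ cannot contain a Steinberg module since those lie in blocks of defect zero. Therefore
\[
 \nl(B)\le q^n-1< q^n\le p^{s(U)}=p^{s(B)},
\]
which is the required strict inequality. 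Blocks of defect zero have $\nl(B)=1=p^0$, and since the strong form only demands strictness in positive defect, these are covered as well.

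The main obstacle is that the clean identities ``number of simple modules $=q^n$'' and ``$|U/\Phi(U)|=q^n$'' are special to the untwisted groups and to those twisted by a graph automorphism of order two or three ($\tw2A_n$, $\tw2D_n$, $\tw3D_4$, $\tw2E_6$); they fail for the very twisted Suzuki and Ree families $\tw2B_2$, $\tw2G_2$, $\tw2F_4$, where $U$ is not a direct product of root subgroups, the two root lengths interact, and both quantities are governed by the twisted rank rather than by $n$. For these families (and the handful of small $q$ and exceptional-multiplier cases set aside above) I would instead verify directly, using the known generic unipotent character degrees and the known structure of the Sylow $p$-subgroups, that the number of semisimple classes is still at most $|U/\Phi(U)|=p^{s(U)}$; excluding the Steinberg module then again yields $\nl(B)<p^{s(B)}$. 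Carrying out this last verification, and deciding how much of it can be made uniform rather than case-by-case, is the part I expect to require the most care.
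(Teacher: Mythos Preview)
Your proposal is correct and follows essentially the same route as the paper: reduce to $G=\bG^F$ with $\bG$ simply connected, use Humphreys' dichotomy on defect groups, count the simple modules as $q^r$ (the paper phrases this via $q$-restricted weights rather than semisimple classes, but these are equivalent), and bound $s(B)$ from below by $rf$. The only cosmetic difference is that you obtain the rank bound from the Frattini quotient $U/\Phi(U)$ while the paper cites the tables in \cite[Tab.~3.3.1]{GLS}; the Suzuki--Ree families you flag as the delicate part are handled in the paper in one line by quoting the known $p$-ranks $f$, $2f$, $5f$ for $\tw2B_2(q^2)$, $\tw2G_2(q^2)$, $\tw2F_4(q^2)$ with $q^2=p^f$, after which the same inequality goes through.
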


\begin{proof}
By Lemma~\ref{lem:min} we need not consider covering groups with $p$ dividing
the order of the centre. But then we may assume that $G$ is the universal
$p'$-covering group of its simple factor. In that case,
$G$ is obtained as $\bG^F$ where $\bG$ is a simple, simply connected linear
algebraic group over an algebraic closure of $\FF_p$ and $F:\bG\rightarrow\bG$
is a Steinberg endomorphism. By a result of Humphreys $G$ has exactly one
$p$-block of defect zero, containing the Steinberg character, and all other
$p$-blocks of $G$ have full defect. First assume that $G$ is not of twisted
type. Then Sylow $p$-subgroups of $G$ have $p$-rank at least $rf$, where
$q=p^f$ is the size of the underlying field of $G$ and $r$ denotes the rank
of $\bG$, see \cite[Tab.~3.3.1]{GLS}.
On the other hand, the simple modules of $G$ in characteristic~$p$ are
parametrised by $q$-restricted weights, of which there exist exactly $q^r$.
One of them belongs to the Steinberg character, so $\nl(B)\le q^r-1$ for all
blocks $B$ of positive defect. Thus
$$\nl(B)\le q^r-1 <q^r\le p^{s(B)}.$$
Next assume that $G$ is twisted, but not of Ree or Suzuki type. Then again by
loc.~cit., the $p$-rank of $G$ is at least $rf$, where $q=p^f$ is the absolute
value of the eigenvalues of $F$ on the character group of a
maximal torus of $\bG$, unless $p=2$ and $G=\SU_3(q)$. But note that the
quotient of a Sylow $p$-subgroup of $\SU_3(q)$ by the highest root subgroup
is elementary Abelian, so has $p$-rank $2f$. The number of simple modules
of $G$ again equals $q^r$. So we may conclude as before.
Finally, the groups $\tw2B_2(q^2)$, $^2G_2(q^2)$ and $\tw2F_4(q^2)$ ($q^2\ge8$)
have $p$-ranks $f$, $2f$, $5f$ respectively, where $q^2=p^f$, and the same
argument applies.
\end{proof}

\begin{rem}
The proof shows that for all primes $p\ge5$ there exist $p$-blocks $B$ of
simple groups with extra-special defect group $p^{1+2}$ in which
$\nl(B)=p^2-1=p^{s(B)}-1$, so the bound in the strong form of
Conjecture~\ref{conj:main} cannot be improved to $p^{s(B)}-2$ for any prime.
Indeed, for $p\equiv2\pmod3$ the principal $p$-block of $\SL_3(p)$ is as
claimed, while for $p\equiv1\pmod3$ we may take the principal $p$-block of
$\SU_3(p)$.
\end{rem}

Concerning nearly simple groups, we make the following observations:

\begin{cor}   \label{cor:graph-field}
 Let $G$ be a finite quasi-simple group of Lie type in characteristic~$p$
 and $\si$ a field, graph or graph-field automorphism of $G$. Then
 Conjecture~\ref{conj:main} holds in strong form for all $p$-blocks of the
 extension $\tilde G:=G\langle\si\rangle$ of $G$.
\end{cor}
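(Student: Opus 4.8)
The plan is to reduce, by standard Clifford-theoretic steps, to the situation already settled in Proposition~\ref{prop:Liedef}, and then to bound the (necessarily bounded) increase of $\nl$ under a cyclic $p'$-extension by means of the explicit description of simple modules in defining characteristic.

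As in Section~\ref{sec:alt}, it suffices to treat the case where $G=\bG^F$ with $\bG$ simple and simply connected: for general quasi-simple $G$ one passes to the universal $p'$-covering group of its simple quotient, to which $\si$ lifts, without changing $\nl$ or defect groups. So assume $G$ is simply connected, and among the field, graph and graph-field automorphisms $\si$ for which the corollary fails pick one with $|\tilde G|$ minimal, where $\tilde G=G\langle\si\rangle$; let $B$ be an offending block, of positive defect. Since $G$ is perfect, $[\tilde G,\tilde G]=G$, so $\tilde G/G$ is cyclic and $O^p(\tilde G)=G\langle\si_0\rangle$ for a suitable power $\si_0$ of $\si$. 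If $B$ is not primitive over $O^p(\tilde G)$, then by Fong--Reynolds it is induced from a block of a proper intermediate group $G\langle\si_1\rangle\subsetneq\tilde G$ with the same $\nl$ and defect group, contrary to minimality; if $B$ is primitive, it covers a unique, hence $\tilde G$-invariant, block $b_0$ of $O^p(\tilde G)$, each projective indecomposable of $B$ is induced from one of $b_0$, and the defect group of $b_0$ lies in that of $B$, so $b_0$ would be an offending block of $O^p(\tilde G)$ --- a contradiction unless $\tilde G=O^p(\tilde G)$. Hence $p\nmid n$, where $n:=|\tilde G:G|$. Finally, by Fong--Reynolds applied to the block of $G$ covered by $B$, we may assume $B$ covers a $\tilde G$-invariant block $b$ of $G$, again without changing $\nl$ or the defect group.

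Since $p\nmid n$, a defect group of $B$ equals one of $b$, which by Humphreys' theorem (as in the proof of Proposition~\ref{prop:Liedef}) is a full Sylow $p$-subgroup of $G$, hence of $\tilde G$; thus $s(B)=s(b)=s(G)\ge rf$, where $q=p^f$ is the relevant field size and $r$ the rank, so $p^{s(B)}\ge q^r$. As $q^r$ is exactly the number of simple $kG$-modules in characteristic~$p$, the Steinberg module alone lying in a block of defect zero, it suffices to prove $\nl(B)<q^r$, and we already have $\nl(b)\le q^r-1$. By Clifford theory the simple $k\tilde G$-modules in $B$ lie over $\langle\si\rangle$-orbits of simple $kG$-modules in $b$, and since $\tilde G/G$ is cyclic of order prime to $p$ there are exactly $n/|O|$ of them over an orbit $O$; summing over orbits,
$$\nl(B)\ \le\ \sum_O\frac{n}{|O|}\ \le\ n\cdot\nl(b),$$
and the last step is very wasteful unless $\si$ fixes almost every simple $kG$-module in $b$. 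That can happen only when the $\si$-fixed $q$-restricted weights are numerous relative to $q^r/|Z(\bG^F)|$, which forces $|Z(\bG^F)|$ to be large and hence $\nl(b)=q^r/|Z(\bG^F)|$ (minus one in the Steinberg class) to be small; a case analysis, using that $n$ is linear in $f$ while $q^r=p^{rf}$ is exponential, then yields $\nl(B)<q^r$ outside a finite list of pairs $(G,\si)$ of very small field size, which are disposed of directly using known Brauer-character data.

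The crux --- and the expected main obstacle --- is this last estimate: one must weigh simultaneously the number of $\si$-fixed $q$-restricted weights in the central-character class defining $b$, the order of $Z(\bG^F)$, and the integer $n$, so as to push $\sum_O n/|O|$ below $q^r$. The exponential growth of $q^r$ does most of the work, but the genuinely small groups --- essentially $q\in\{2,3\}$ carrying a graph automorphism, plus a few triality and twisted cases --- need an explicit finite check.
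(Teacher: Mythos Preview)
Your reductions (passing to the simply connected cover, reducing to $\si$ of $p'$-order via $O^p(\tilde G)$, and the Clifford-theoretic expression $\nl(\tilde B)\le\sum_O n/|O|$ over $\langle\si\rangle$-orbits of simple $kG$-modules) are correct and coincide with the paper's setup. The gap is in the final step: you do not actually prove the inequality $\sum_O n/|O|<q^r$, you only announce a case analysis ``using that $n$ is linear in $f$ while $q^r$ is exponential'' together with a finite check. That sketch is not a proof, and the balancing act you describe (weighing $\si$-fixed weights, $|Z(\bG^F)|$, and $n$ block by block) would be genuinely laborious to carry out.

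What you are missing is that no case analysis is needed. Via Steinberg's tensor product theorem the set of $q$-restricted weights is naturally identified with $(\FF_p^{\,r})^f$, an elementary Abelian $p$-group $P$ of order $q^r$, and the (now $p'$-order) automorphism $\si$ acts on it faithfully. Your orbit sum $\sum_O n/|O|$, taken over all $q$-restricted weights, is precisely $k(P\rtimes\langle\si\rangle)$; by Lemma~\ref{lem:k(GV)} (i.e.\ the $k(GV)$-theorem) this is at most $|P|=q^r$. Since the Steinberg weight is $\si$-fixed, the $n$ simple $\tilde G$-modules lying over it sit in defect-zero blocks, so any block $\tilde B$ of positive defect satisfies $\nl(\tilde B)\le q^r-n<q^r\le p^{s(\tilde B)}$. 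This is exactly how the paper proceeds: it invokes Lemma~\ref{lem:k(GV)} directly rather than attempting any estimate of fixed weights or centre orders.
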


\begin{proof}
Let $B$ be a $p$-block of $G$ of positive defect and $\tilde B$ a $p$-block
of $\tilde G$ covering $B$. As explained in the proof of
Proposition~\ref{prop:Liedef} the simple modules in $B$ are labelled by
(a subset of the) $q$-restricted weights for $G$, which naturally can be given
the structure of an elementary Abelian $p$-group $P$. Now the action of $\si$
on the simple modules is induced by an action of $\si$ on the associated
weights, which may be considered as a faithful action of $\si$ on $P$, where
$r$ is the rank of $G$. Note that we may assume that $\si$ has order prime to
$p$. Then by Lemma~\ref{lem:k(GV)}, the number of characters of
$\tilde G$ above those in $B$ is at most $q^r$ minus the number of
characters lying above the Steinberg weight (which belongs to a different
block). Since $s(\tilde B)\ge s(B)$ the claim now follows from the arguments
in the proof of Proposition~\ref{prop:Liedef}.
\end{proof}

\subsection{Exceptional coverings and small rank}
We first consider the finitely many exceptional covering groups.

\begin{prop}   \label{prop:excover}
 Conjecture~\ref{conj:main} in strong form holds for all blocks of exceptional
 covering groups of finite simple groups of Lie type and all primes.
\end{prop}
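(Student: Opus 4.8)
The plan is to proceed by a finite check, since there are only finitely many exceptional covering groups of finite simple groups of Lie type, and for each the relevant data (decomposition matrices, or at least the number $\nl(B)$ of irreducible Brauer characters in each block, together with the order and sectional rank of a defect group) is available in the literature and in computer algebra systems. First I would list the groups in question from \cite[Tab.~6.1.3]{GLS} or the ATLAS: the exceptional multipliers occur for $\SL_2(4)\cong\fA_5$ (handled as an alternating group), $\PSL_3(4)$, $\PSL_3(2)\cong\PSL_2(7)$, $\PSU_4(3)$, $\PSp_6(2)$, $\mathrm{O}_7(3)\cong\PSp_6(3)$ (via $\Omega_7(3)$), $G_2(3)$, $G_2(4)$, $F_4(2)$, $\tw2E_6(2)$, $\Omega_8^+(2)$, $\tw2B_2(8)$, and $\tw2G_2(3)'\cong\PSL_2(8)$, together with the extra $p$-part of the Schur multiplier for $\PSL_3(4)$ (which is $4^2\times 3$), $\PSL_4(2)\cong\fA_8$, etc. For blocks with cyclic defect, or with defect group of order $\le p^2$, nothing needs to be checked by the reductions already recorded at the start of Section~\ref{sec:alt}; so only blocks of defect at least~$3$ remain, which cuts the list down considerably.

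Next, for each surviving block $B$ I would record $\nl(B)$ and the isomorphism type (or at least the sectional rank) of a defect group $D$, and verify $\nl(B)\le p^{s(B)}$, noting strict inequality in all cases. The values $\nl(B)$ can be read off from the known decomposition matrices (e.g.\ in the modular ATLAS or the online database of Jansen--Lux--Parker--Wilson and its successors), and for blocks whose defect group is a Sylow $p$-subgroup the sectional rank is obtained from \cite[Tab.~5.6.1, 3.3.1]{GLS}; for the (few) non-principal, non-full-defect blocks one identifies the defect group directly, often via a Brauer correspondent in a centraliser, exactly as was done for $Ly$ in the proof of Proposition~\ref{prop:spor}. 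Since $p^{s(B)}$ grows rapidly with the sectional rank while $\nl(B)$ for these fixed small groups is a modest explicit integer, essentially every case has a comfortable margin; the only place one should be watchful is $p=3$ with $D$ potentially extraspecial of order~$27$, where equality could in principle occur, so those blocks (if any arise, e.g.\ in $G_2(3)$ or $\Omega_7(3)$) must be inspected individually to confirm strict inequality.

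The main obstacle is not conceptual but bookkeeping: assembling a complete and correct list of the exceptional covers together with their full set of primes, and making sure no block of defect $\ge 3$ is overlooked — in particular the blocks of the larger groups $F_4(2)$, $\tw2E_6(2)$, $\Omega_8^+(2)$ and $\PSU_4(3)$, where the number of blocks is large and some $\nl(B)$ are only known from computer computation. I would therefore organise the verification group by group, in each case first isolating the primes $p$ dividing the order of the Schur multiplier (the only new blocks relative to the simple group are the faithful ones, and by Lemma~\ref{lem:min} we never need to go to a genuine $p$-cover), then among the faithful blocks discarding those of defect $\le 2$ or cyclic defect, and finally checking the remaining finite list against the tabulated $\nl(B)$ and sectional ranks. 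Where an independent decomposition matrix is unavailable, the bound $\nl(B)\le k(B)$ combined with Brauer--Feit or with a direct count of $p$-regular classes of $G$ in the block suffices, since for these small groups the total number of $p$-regular classes is itself well below $p^{s(B)}$ once $s(B)\ge 3$.
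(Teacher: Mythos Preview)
Your proposal is correct and follows essentially the same finite-check approach as the paper. The paper's proof is much terser, however, because it records one additional simplifying fact that you do not anticipate: for every exceptional covering group in question, every $p$-block is either of maximal defect or of defect at most~$2$. Consequently there are no ``non-principal, non-full-defect'' blocks of defect $\ge 3$ to worry about, and the whole apparatus you set up for identifying defect groups via Brauer correspondents (as in the $Ly$ case) is never needed; one simply compares the sectional rank of a Sylow $p$-subgroup with the block data read off from the ordinary character table. In particular the paper does not invoke decomposition matrices at all --- the bound $\nl(B)<k(B)$ for blocks of positive defect, together with the known ordinary tables, already suffices.
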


\begin{proof}
Again, the statement can be essentially verified by inspection on the known
ordinary character tables of the groups in question together with obvious
lower bounds on the $p$-ranks. All blocks are either of maximal
defect, or of defect at most~2.
\end{proof}

We next consider exceptional groups of small rank.

\begin{prop}   \label{prop:smallexc}
 Let $B$ be a $p$-block of a finite quasi-simple group $G$ of Lie type
 $\tw2B_2,\tw2G_2$, $G_2,\tw3D_4$ or $\tw2F_4$. Then Conjecture~\ref{conj:main}
 holds for $B$. It holds in strong form unless $B$ is the principal 3-block
 of $\tw2F_4(q^2)^({}'{}^)$. 
\end{prop}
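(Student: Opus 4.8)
The plan is to treat the five families $\tw2B_2(q^2)$, $\tw2G_2(q^2)$, $G_2(q)$, $\tw3D_4(q)$ and $\tw2F_4(q^2)$ one at a time, in each case splitting the $p$-blocks (for $p$ different from the defining characteristic, the defining-characteristic case being covered by Proposition~\ref{prop:Liedef}) into three types: blocks of defect zero (nothing to prove), blocks of cyclic or small defect (handled by the already-quoted results on cyclic defect and by Brauer--Feit for defect~$\le2$, as recorded at the start of Section~\ref{sec:alt}), and blocks of defect at least~$3$ with non-cyclic defect group. For the last type I would use the explicit knowledge of the generic character tables and decomposition matrices of these small-rank groups: these are all available in the literature (Suzuki, Ward, Enomoto, Hiss--L\"ubeck, Deriziotis--Michler, Malle, etc.), so $\nl(B)$ can be read off for every block in every family, as a function of $q$ and the congruence class of $q$ modulo the relevant cyclotomic values.

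Next, for each such block $B$ I would bound $s(B)$ from below. Since $G$ has (twisted) rank~$1$ or $2$, a defect group $D$ of $B$ is contained in a Sylow $\ell$-subgroup $S$ of a centraliser $C_G(x)$ of a suitable $\ell$-element (via the Brauer correspondent), and these centralisers are themselves reductive groups of rank $\le2$ whose structure is completely known. In most cases $D$ is abelian -- indeed a Sylow $\ell$-subgroup of a maximal torus -- of rank~$1$ or~$2$, so $s(B)$ equals $1$ or $2$ accordingly; in the non-abelian cases ($p=3$ for $\tw2F_4$, or $p\in\{2,3\}$ for $G_2$ and $\tw3D_4$) one computes the sectional rank directly from the (small) structure of $D$. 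Comparing the two sides, the unipotent blocks are the delicate ones: here one invokes Theorem~\ref{thm:Lieunip} (unipotent blocks, $p$ good) for the good-prime cases, leaving the bad primes $p=2,3$ to be checked by hand from the tables. I expect that in essentially every case $\nl(B)$ is a small number bounded by a constant (at most around $10$), while $p^{s(B)}\ge p^2\ge4$, so the inequality -- usually the strong inequality -- is immediate.

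The genuine exception arises for the principal $3$-block of $\tw2F_4(q^2)$ (and its simple quotient $\tw2F_4(2)'$): here the defect group is extra-special of order~$27$, so $s(B)=2$ and $p^{s(B)}=9$, while the principal $3$-block of $\tw2F_4(q^2)$ has exactly $9$ irreducible Brauer characters for all $q^2\ge8$ (and also for $q^2=2$, by Proposition~\ref{prop:spor}). Thus Conjecture~\ref{conj:main} holds with equality, not strictly, precisely in this family -- which is exactly the content of the ``unless'' clause. I would verify the count $\nl(B)=9$ either from Malle's determination of the decomposition numbers of $\tw2F_4(q^2)$ at $p=3$, or by noting that the principal block is Morita (even splendidly) equivalent to that of its normaliser-type local subgroup, whose block invariants for the extra-special group $3^{1+2}_+$ are known.

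The main obstacle will be the bad-characteristic, non-unipotent blocks of $G_2(q)$ and $\tw3D_4(q)$ for $p=2,3$: there the defect groups need not be abelian and the decomposition matrices, while known, are somewhat intricate, so one must carefully match each block's $\nl(B)$ against a correct lower bound for $s(B)$ coming from the local structure of $C_G(x)$. Everything else reduces to a finite inspection once the generic tables are in hand; the only place where a uniform inequality genuinely fails is the one already flagged, and isolating it cleanly is the real point of the proposition.
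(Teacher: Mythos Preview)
Your overall strategy is the paper's: exclude defining characteristic (Proposition~\ref{prop:Liedef}) and cyclic defect, then for each remaining family read off $\nl(B)$ from the known decomposition matrices and compare with the sectional rank of a Sylow $p$-subgroup. Two concrete points need repair.

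First, Theorem~\ref{thm:Lieunip} is stated only for groups of \emph{classical} Lie type, so it does not apply to $G_2$, $\tw3D_4$ or $\tw2F_4$ (and Proposition~\ref{prop:unipexc} requires rank~$\ge4$). The paper treats these small exceptional types entirely by direct inspection: Hiss and Hiss--Shamash for $G_2(q)$ (yielding $\nl(B)\le7$ for the principal block, $\nl(B)\le3$ otherwise, against $p$-rank $\ge2$), Geck and Himstedt for $\tw3D_4(q)$ (again $\nl(B)\le7$ for the principal block), and Himstedt --- not Malle --- for $\tw2F_4(q^2)$ at odd primes. Your plan already provides this fallback, so simply drop the appeal to Theorem~\ref{thm:Lieunip}. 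Second, the defect group of the principal $3$-block of $\tw2F_4(q^2)$ is \emph{not} extra-special of order~$27$ once $q^2\ge8$; the Sylow $3$-subgroup already has order $3^5$ when $q^2=8$. What survives, and is what the argument actually uses, is that its sectional rank is~$2$; combined with Himstedt's count $\nl(B)=9$ this yields the asserted equality case. The description as $3^{1+2}_+$ is correct only for $\tw2F_4(2)'$, handled separately in Proposition~\ref{prop:spor}.
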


\begin{proof}
The exceptional covering groups were dealt with in
Proposition~\ref{prop:excover}.
Next, if $p$ is the defining prime for $G$ the result was already proved in
Proposition~\ref{prop:Liedef}. Furthermore, we may assume that the defect
groups of $B$ are non-cyclic. Thus $G$ is of type $G_2, \tw3D_4$ or $\tw2F_4$,
or $p=2$ and $G={}^2G_2(q^2)$. In the latter case the Sylow 2-subgroups are
elementary Abelian of order~8, while $\nl(B)=3$ for all 2-blocks of defect at
least~2 by \cite[p.~74]{Wa66}.
\par
For $G_2(q)$ the numbers $\nl(B)$ in the non-cyclic defect case were determined
by Hiss and Shamash \cite{Hi89,HS90,HS92}: The principal block $B$ has
$\nl(B)\le7$ in all cases, while the $p$-rank is equal to~2 when $p\ge3$,
and~3 for $p=2$. All other blocks $B$ have $\nl(B)\le3$. For $\tw3D_4(q)$,
then \cite[Thm.~A]{Ge93} shows that $\nl(B)\le7$ for the principal $p$-block
when $p$ is good (so when $p\ne2$), while  non-cyclic Sylow $p$-subgroups
obviously have
$p$-rank at least~2. When $p=2$ then again $\nl(B)=7$ by \cite[Thm.~3.1]{Hi07}
and Sylow 2-subgroups have rank~3. For all other blocks the validity of
Conjecture~\ref{conj:main} is immediate. Finally, for $\tw2F_4(q^2)$,
$q^2\ge8$, the numbers $\nl(B)$ for primes $p\ge3$ have been
determined in \cite{Hi11}. From this the inequality can be checked, and as
for the case of $\tw2F_4(2)'$ in Proposition~\ref{prop:spor} we obtain
equality (only) when $B$ is the principal 3-block. 
\end{proof}

\begin{thm}   \label{thm:SLSU}
 Let $B$ be a $p$-block of a quasi-simple group $\SL_n(q)$ ($n\ge2$)
 or $\SU_n(q)$ ($n\ge3)$ in characteristic~$r\ne p$. Then
 Conjecture~\ref{conj:main} holds for $B$.
\end{thm}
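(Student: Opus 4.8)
The plan is to exploit the explicit description of the $r$-modular representation theory of finite general linear and unitary groups due to Dipper–James, Fong–Srinivasan and Broué, reducing to a purely combinatorial estimate. First I would pass, using Lemma~\ref{lem:min}, to the universal $r'$-covering group, so that we may work inside $\GL_n(q)$ or $\GU_n(q)$ rather than the simple group itself (the center and the $\GL/\SL$ discrepancy are handled by Clifford theory, as in the proof of Proposition~\ref{prop:alt}: restriction changes $\ell(B)$ only by a bounded factor, and by at most the number of $r'$-diagonal automorphisms being coprime to $r$). By Lemma~\ref{lem:min} we may also assume the defect group $D$ is non-cyclic and of defect at least~$3$. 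Next I would invoke the Fong–Srinivasan classification of blocks: an $r$-block $B$ of $\GL_n(q)$ (resp.\ $\GU_n(q)$) is determined by a semisimple $r'$-element $s$ (a ``Jordan label'') together with an $e$-core tower / a multi-partition datum, where $e$ is the multiplicative order of $q$ (resp.\ of $-q$) modulo~$r$ (or $2\ell$ for $r=2$). The block is Morita equivalent (Broué, Chuang–Rouquier, etc.) to a product of blocks, and crucially $\ell(B)$ equals the number of $e$-regular multipartitions of the relevant ``weight'' $w$, i.e.\ $\ell(B)=k(e-1,w)$ or a comparable quantity (a product of such over the distinct eigenvalue contributions, but the total weight is additive).

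The heart of the argument is then the estimate on $s(B)$. Here I would argue that the defect group $D$ of a block of weight $w$ (in the Fong–Srinivasan sense) contains, as in the symmetric-group case, a subgroup isomorphic to a Sylow $\ell$-subgroup of $\fS_w$ acting on $w$ copies of a cyclic group of order $\ell^a$ (the $\ell$-part of $\Phi_e(q)$ or $q^e-1$), so in particular $D$ has an elementary abelian section of rank at least $w$ — whence $s(B)\ge w$. Combined with the block-theoretic count $\ell(B)=k(e-1,w)$ (or the relevant product), the desired bound $\ell(B)<r^w\le r^{s(B)}$ follows exactly as in Proposition~\ref{prop:alt} from Olsson's Lemma~\ref{lem:Olsson}: we have $k(e-1,w)<e^w\le$ ... wait — the subtlety is that $e$ need not be $\le r$, indeed typically $e<r$ but $r^w$ is what we want to beat, and since $e-1\le r-1$ when $e<r$ (which always holds as $e\mid r-1$... no, $e\mid$ order so $e\le r-1$ when $r\nmid e$), we get $k(e-1,w)\le (e-1)^w\cdot(\text{Olsson slack})\le (r-1)^w\cdot c < r^w$ for $w$ not too small, and the finitely many small-$w$ exceptions (where Olsson's bound degrades, namely $e-1=2$, $w\le6$) are checked by hand using known decomposition matrices.

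The main obstacle, I expect, is twofold. First, the bookkeeping of the Jordan decomposition of blocks: a general block of $\GL_n(q)$ is a ``twisted tensor product'' over several semisimple eigenvalues $s$, and one must verify that both $\ell(B)$ and the rank estimate $s(B)\ge w$ behave additively/multiplicatively so that the single-weight estimate suffices — this requires care with the structure of $C_{\bG^*}(s)$ and its own block theory, but is standard via Bonnafé–Rouquier / Fong–Srinivasan. Second, and more delicate, is the case $r=2$, where the order $e$ is replaced by $2\ell$-type invariants, $k(e-1,w)$ must be replaced by the correct count (involving $k(1,w)$ when $e=1$, i.e.\ $q$ odd, $\ell=2$), defect groups are wreath-type $2$-groups whose sectional rank needs the sharper analysis used for $\fA_n$ in Proposition~\ref{prop:alt}, and the small-weight cases $w\le 3$ must be inspected individually (as was done for $Ly$ and $Co_3$ in Proposition~\ref{prop:spor}). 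I would isolate $r=2$ as a separate subcase and push through the wreath-product rank computation there directly.
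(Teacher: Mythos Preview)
Your overall strategy---pass to $\GL_n(q)$ (resp.\ $\GU_n(q)$), use the Fong--Srinivasan block parametrisation, reduce non-unipotent blocks to unipotent ones via Bonnaf\'e--Rouquier, bound $\nl(B)$ by an Olsson-type estimate, and observe that defect groups have elementary abelian subgroups of rank at least the weight~$w$---is exactly the paper's approach. Two points need correction, however.

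First, your count $\nl(B)=k(e-1,w)$ is wrong: you have transplanted the symmetric-group formula. For a unipotent block of $\GL_n(q)$ of weight~$w$, the correct count is $\nl(B)=k(d,w)$ with $d$ the order of $q$ modulo~$p$ (i.e.\ your $e$, not $e-1$). The paper obtains this not by counting $e$-regular partitions \`a la Dipper--James, but by invoking Geck's theorem that $\cE(G,s)$ is a basic set for $\cE_p(G,s)$ when $p$ is good; then $\nl(B)$ is simply the number of unipotent characters in $B$, which is $k(d,w)$ via the $d$-quotient bijection. Since $d\le p-1$, Olsson gives $k(d,w)<(d+1)^w\le p^w$, so the bound survives---but your stated formula and the subsequent muddled estimate (``$k(e-1,w)\le(e-1)^w\cdot(\text{Olsson slack})$'') should be replaced by this cleaner argument.

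Second, your Clifford-theoretic passage from $\GL_n$ to $\SL_n$ is too vague. The paper distinguishes two cases: if $p\nmid(q-1)$ then $|\tilde G:G|$ and $|Z(\tilde G)|$ are prime to $p$, so defect groups coincide and the basic-set argument transfers directly; if $p\mid(q-1)$ then $d=1$, every $B_i$ is the principal block of its factor with $\nl(B_i)=k(1,n_i)$ and $s(B_i)\ge n_i$, and one uses $s(B)\ge s(\tilde B)-1$ together with $\nl(B)\le n\,\nl(\tilde B)$, checking the residual small cases $n_i=2$, $p\le3$ by hand. Your sketch does not isolate this dichotomy. (Also, watch your notation: in the statement $r$ is the defining characteristic and $p$ the block prime; you have swapped them throughout.)
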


\begin{proof}
Our argument crucially relies on the description of $p$-blocks of $\GL_n(q)$
and $\GU_n(q)$ by Fong and Srinivasan \cite{FS82}, as well as on the result of
Geck \cite[Thm.~A]{Ge93} that for any finite group of Lie type $G$ and any
semisimple $p'$-element $s\in G^*$ the set $\cE(G,s)$ forms a basic set for
$\cE_p(G,s)$ whenever $p$ is a good prime not dividing the order of the group
of components of the center of the underlying algebraic group.
\par
We first show our claim for unipotent $p$-blocks of $G=\GL_n(q)$. Note that the
centre of $\GL_n$ is connected. Thus the unipotent characters form a basic set
in any unipotent block $B$, and the number $\nl(B)$ is just the number
of unipotent characters in $B$. Let $d$ denote the multiplicative order of $q$
modulo~$p$; in particular we then have $d\le p-1$. Let $B$ be a unipotent
$p$-block of $G$. Then the unipotent characters in $B$ are those labelled by
partitions $\mu$ of $n$ with a fixed $d$-core $\lambda\vdash n-wd$
for a suitable weight~$w$, see \cite{FS82}, and \cite[Thm.~21.14]{CE} for
$p=2$. The number of such partitions equals $k(d,w)$, so $\nl(B)=k(d,w)$.
On the other hand, by \cite{FS82} a defect group $D$ of $B$ contains an
elementary Abelian subgroup of order~$p^w$, so $s(B)\ge w$ (respectively for
$p=2$ we have $d=1$ and $B$ is the principal block). Now we have
$k(d,w)< (d+1)^w$ by Lemma~\ref{lem:Olsson} and as $d\le p-1$, our claim
follows for unipotent blocks.
\par
Next let $B$ be an arbitrary $p$-block of $\GL_n(q)$. By the theorem of
Bonnaf\'e--Rouquier \cite[Thm.~10.1]{CE}, $B$ is Morita equivalent to a
unipotent block of
$C_{G^*}(s)$, with isomorphic defect groups. (Note that here
$G^*\cong G=\GL_n(q)$.) But
$$C_{G^*}(s)=G_1\times\cdots\times G_r\quad
    \text{ with }\quad G_i\cong\GL_{n_i}(q^{f_i})
    \text{ for certain } n_1f_1+\ldots+n_rf_r=n,$$
so $B$ is the product of unipotent blocks $B_i$ of $G_i$, and similarly for
the defect groups. Thus our previous considerations show that $B$ satisfies
the assertion of Conjecture~\ref{conj:main}.
\par
Now consider the case when $G=\SL_n(q)$. Embed $G$ as a subgroup of
$\tilde G=\GL_n(q)$ in the natural way. Let $B$ be a $p$-block of $G$ in
series $\cE_p(G,s)$. Let $\tilde s\in\tilde G^*$ be a preimage of $s$
under the induced epimorphism $\tilde G^*\rightarrow G^*$ and $\tilde B$ be
a $p$-block of $\tilde G$ in $\cE_p(\tilde G,\tilde s)$ covering $B$. By the
previous case, the claim holds for $\tilde B$. But then it also holds for
$B$ whenever $p{\not|}(q-1)$, since on the one hand side $|Z(G)|$ divides
$q-1$, hence is prime to $p$ and so $\Irr(B)\cap\cE(G,s)$ is a basic
set for $B$, and on the other hand $|\tilde G:G|=q-1$ is prime to $p$ in this
case, so the defect groups of $B$ and $\tilde B$ agree.
\par
So now assume that $p|(q-1)$, whence $d=1$. As $\tilde G/G$ is cyclic, $B$
satisfies $s(B)\ge s(\tilde B)-1$, and furthermore $\nl(B)\le n \nl(\tilde B)$.
As above write
$$C_{\tilde G^*}(\tilde s) =G_1\times\cdots\times G_r$$
and let $B_i$ denote the unipotent $p$-block of $G_i$ such that $\tilde B$ is
Morita equivalent to $B_1\otimes\cdots\otimes B_r$. Since $p|(q-1)$, each
$B_i$ is the principal block of $G_i$, and thus contains $k(1,n_i)$ unipotent
characters, while $s(B_i)\ge n_i$. It is straightforward to check that
the required inequality holds unless possibly when $n_i=2$, $p\le3$. For
$n_i=2$ and $p=3$ the Sylow $p$-subgroups of $G_i$ are cyclic, while for
$n_i=p=2$ we have $\nl(B_i)=3$ and $|D_i|=8$.
\par
The preceding arguments carry over almost word by word to $\SU_n(q)$. Again,
we first deal with $G=\GU_n(q)$. Note that centralisers of semisimple elements
$s\in G^*\cong\GU_n(q)$ have the form
$$C_{G^*}(s)= G_1\times\cdots\times G_r\quad\text{ with} \quad
  G_i\cong\GL_{n_i}((-q)^{f_i})$$
for suitable $n_1f_1+\ldots+n_rf_r=n$, where $\GL_m(-u)$ with $-u<0$ is to be
interpreted as $\GU_m(u)$. Then, setting $d$ now to be the order of $-q$
modulo~$p$ we can argue as before. For $p=2$ we again use the fact that all
unipotent characters lie in the principal 2-block, see \cite[Thm.~21.14]{CE}.
We then deal with $\SU_n(q)$ via the regular embedding
$\SU_n(q)\hookrightarrow\GU_n(q)$. Here, the critical case is when $p|(q+1)$,
but then again all unipotent characters of $G_i\cong\GL_{n_i}((-q)^{f_i})$ lie
in the principal block and the same estimates as above yield the claim.
\end{proof}

\subsection{Unipotent blocks}
While our results for the classes of quasi-simple groups considered so far are
complete, it seems that at present the knowledge on blocks of other groups of
Lie type in non-defining characteristic is not yet strong enough to check
Conjecture~\ref{conj:main} in all cases; we only obtain partial results:

\begin{thm}   \label{thm:Lieunip}
 Let $B$ be a unipotent $p$-block of a finite quasi-simple group $G$ of
 classical Lie type in characteristic~$r\ne p$. Then Conjecture~\ref{conj:main}
 holds for $B$.
\end{thm}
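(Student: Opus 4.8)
The plan is to follow the pattern established for $\SL_n(q)$ and $\SU_n(q)$ in Theorem~\ref{thm:SLSU}, but now for the remaining classical types $B_n$, $C_n$ and $D_n$ (including $\tw2D_n$). First I would reduce, via Lemma~\ref{lem:min} and the remark preceding Lemma~\ref{lem:Olsson}, to the case where $G$ is the universal $p'$-covering group of its simple factor, so $G=\bG^F$ with $\bG$ simple simply connected of classical type; since we are treating \emph{unipotent} blocks I may further pass to the adjoint group or to $\SO$, $\Sp$, $\GU$ etc.\ whose centre is connected, because unipotent characters and unipotent blocks are insensitive to isogeny and to the centre. With a connected centre, for $p$ a good prime not dividing the relevant component group, Geck's theorem (\cite[Thm.~A]{Ge93}, already invoked above) gives that the unipotent characters in $B$ form a basic set, so $\nl(B)$ equals the number of unipotent characters in~$B$; for $p=2$ one uses instead that \emph{all} unipotent characters lie in the principal block (\cite[Thm.~21.14]{CE}), and there is a special argument for bad primes.

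Next I would invoke the Fong--Srinivasan description \cite{FS82} of unipotent $p$-blocks of the classical groups: fixing $d$ the order of $q$ (resp.\ $-q$, resp.\ $2d$ in some twisted cases) modulo~$p$, the unipotent characters in a unipotent block $B$ of weight $w$ are parametrised by (pairs of) partitions/symbols with a fixed $d$-core, and their number is one of $k(2,w)$, $k(1,w)$, or a product of two such quantities depending on the type and on the parity issues for $D_n$ and $\tw2D_n$ — the precise combinatorics is exactly the bookkeeping with $d$-cohooks and $d$-cores on $\beta$-sets and symbols. Simultaneously \cite{FS82} shows the defect group $D$ of $B$ contains an elementary abelian subgroup of order $p^w$ (built from a $d$-dimensional torus raised to the $w$-th power, intersected with the group), so $s(B)\ge w$; and for the twisted types or when $d$ is larger one even gets $s(B)$ at least a multiple of $w$. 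The heart of the proof is then the numerical inequality $\nl(B)\le$ (product of $k(\cdot,w_i)$'s) $< p^{s(B)}$, which by Lemma~\ref{lem:Olsson} reduces to checking $(d+1)^w< p^{w}$ together with the small exceptions $d=p-1$ (forcing $(d+1)^w=p^w$, so one needs the $-1$ from the Steinberg character or the strict containment of the elementary abelian subgroup in $D$) and $d=1$, $p\le 3$, $w\le 6$, which are finitely many cases handled by hand as in the $\SL$/$\SU$ arguments.

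For arbitrary (non-unipotent) unipotent-block... — rather, to keep the scope as stated, only unipotent blocks are treated, so there is no Bonnaf\'e--Rouquier/Jordan-decomposition step here; the single block $B$ is directly one of the Fong--Srinivasan unipotent blocks, possibly a tensor product over a centraliser-of-torus decomposition when passing between $\SO_{2n+1}$, $\Sp_{2n}$ and the half-spin situations, in which case $\nl(B)$ and $s(B)$ both split as products/sums over the factors and the inequality is multiplicative. I expect the main obstacle to be the type $D_n$ and its twisted form $\tw2D_n$: there the unipotent blocks come in two flavours according to whether $d$ is odd or even (the ``degenerate'' symbols, the factor-of-$2$ or factor-of-$\frac12$ discrepancies between $\nl$ for $D_n$ versus $B_n$, and the fact that some symbols are fixed by the graph automorphism), so one must be careful that the extra combinatorial factor is still dominated by the extra rank one gains in $s(B)$ — exactly the phenomenon seen for $\fA_n$ versus $\fS_n$ in Proposition~\ref{prop:alt}, and it will again come down to verifying $2k(2,w)\le 2\cdot 2^w < p^{s(B)}$ with $s(B)$ one larger than the naive bound, plus a handful of small-$w$ checks. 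A secondary nuisance is the treatment of bad primes ($p=2$ for all classical types), where Geck's basic-set result does not directly apply; there one leans on \cite[Thm.~21.14]{CE} and the fact that $d=1$, so the relevant count is $k(1,w)+k(1,w/2)$-type expressions bounded by $2^w\le 2^{s(B)}$, again mirroring the $p=2$ analysis for alternating groups.
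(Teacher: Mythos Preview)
Your overall strategy---Geck's basic-set theorem plus combinatorial bounds on the number of unipotent characters---is the right shape, but two of the steps do not go through as you describe.

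First, for types $B_n$, $C_n$, $D_n$ with $p$ good, the number of unipotent characters in a block of weight $w$ is not $k(2,w)$ or $k(1,w)$ as you write, but $k(2d,w)$ when $d$ is odd and $k(d,w)$ when $d$ is even, because the relative Weyl group is $C_{2d}\wr\fS_w$ respectively $C_d\wr\fS_w$ (see \cite[Thm.~3.2]{BMM}). Consequently the inequality you need is $k(2d,w)<p^w$, and the crucial arithmetic point---which your sketch omits---is that $d$ odd and $d\mid p-1$ with $p$ odd force $d\le(p-1)/2$, so that $2d\le p-1$ and Lemma~\ref{lem:Olsson} applies. Your estimate $(d+1)^w<p^w$ is off by a factor of two in the first argument and would fail, for instance, when $d=(p-1)/2$. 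In type $D_n$ the degenerate-symbol blocks furthermore require the character count of the index-two subgroup $G(2d,2,w)$ of $C_{2d}\wr\fS_w$, which the paper handles with a separate lemma.

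Second, and more seriously, your treatment of $p=2$ is based on a wrong premise. For types $B_n$, $C_n$, $D_n$ the prime $2$ is bad, and although all unipotent characters do lie in the principal $2$-block, they do \emph{not} form a basic set there, so the expression $k(1,w)+k(1,w/2)$ you propose counts the wrong thing---it gives the number of unipotent characters, not $\nl(B)$. The paper instead invokes a different result of Geck \cite[Prop.~2.4]{Ge94}: for the adjoint-type group $\tilde G$, the number $\nl(\tilde B)$ equals the number of unipotent \emph{conjugacy classes} of $\tilde G$. One then bounds that class number by $2^{n+\flsq{n}}$, $2^{n+\flsq{2n+1}}$, $2^{n+\flsq{2n}}$ (types $C_n$, $B_n$, $D_n$) via a direct analysis of the partition-pair parametrisation and the orders of component groups of centralisers, and compares against explicit lower bounds for the sectional $2$-rank of $G$ coming from the embedded subgroups $\Sp_2(q)^n$ and $2^{n-1}.\fA_n$. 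Your $p=2$ paragraph would need to be replaced by this entirely different argument.
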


\begin{proof}
We will use throughout the result of Geck \cite[Thm.~A]{Ge93} that the
unipotent characters form a basic set for the unipotent $p$-blocks of any
finite group of Lie type for which $p$ is a good prime and such that $p$ does
not divide the order of the group of components of the center of the underlying
algebraic group. Then in particular in any unipotent block $B$, the number
$\nl(B)$ is just the number of unipotent characters in $B$.
\par
First consider $G=\Sp_{2n}(q)$ and $G=\SO_{2n+1}(q)$. If $p>2$ the
unipotent characters form a basic set for the
unipotent $p$-blocks of $G$. Let $d$ denote the order of $q$ modulo~$p$.
We need to distinguish two cases. First assume that $d$ is odd. Then the
unipotent blocks of $G$ are indexed by Lusztig symbols $\cS$ without $d$-hooks,
called $d$-cores, and a unipotent character $\chi$ lies in the block $B$
corresponding to $\cS$ if and only if it has $d$-core $\cS$. The number of such
characters equals the number of irreducible characters of the corresponding
relative Weyl group
(see \cite[Thm.~3.2]{BMM}), of type $C_{2d}\wr \fS_w$, where $w$ is the weight
of $B$. Thus $\nl(B)=k(2d,w)$. Defect groups of $B$ then contain an elementary
Abelian subgroup of order $p^w$, so we have $s(B)\ge w$. Now note that $d$
divides $p-1$, and as $d$ is odd this forces $d\le(p-1)/2$. By
Lemma~\ref{lem:Olsson} we have $\nl(B)=k(2d,w)\le k(p-1,w)<p^w\le p^{s(B)}$.
On the other hand, if $d$ is even, then again using \cite[Thm.~3.2]{BMM} we
have that $\nl(B)=k(C_d\wr \fS_w)=k(d,w)$ with $d\le p-1$, while still
$s(B)\ge w$, so we can conclude as before. \par
All of the above assertions remain valid for unipotent blocks of groups of
type $D_n$ or $\tw2D_n$, unless $B$ contains the unipotent characters labelled
by symbols whose $d$-core (when $d$ is odd) or $e$-cocore (when $d=2e$ is even)
is a so-called degenerate symbol. First assume that $d$ is odd. Then the
relative Weyl group is the reflection subgroup $G(2d,2,w)$ of $C_{2d}\wr \fS_w$
of index~2, defect groups of $B$ still have rank at least~$w$, and
$\nl(B)< (2d+1)^w\le p^w$ by Lemma~\ref{lem:degenerate}. We may argue entirely
similar when $d$ is even.
\par
If $G$ is of type $B_n$, $C_n$ or $D_n$ and $p=2$, then let
$G\hookrightarrow\tilde G$ denote a regular embedding. By \cite[Thm.~13]{CE93}
all unipotent characters of $\tilde G$ lie in the principal 2-block $\tilde B$
of $\tilde G$. Furthermore, by \cite[Prop.~2.4]{Ge94} the number
$\nl(\tilde B)$ equals the number of unipotent classes of $\tilde G$. Upper
bounds for these are given in Lemma~\ref{lem:uclass}. Now first assume that we
are in type $B_n$ or $C_n$. Then $\tilde G$ induces on $G$ an outer
automorphism of order~2. Thus if $B$ is the principal 2-block of $G$, then
$\nl(B)\le 2 \nl(\tilde B)$. Lower bounds for the sectional 2-rank of $G$
are given in Table~\ref{tab:2rank}.

\begin{table}[htbp]
\caption{Lower bounds for sectional 2-ranks, $q$ odd}   \label{tab:2rank}
\[\begin{array}{|r|cccc|}
\hline
 G& B_n(q)& C_n(q)& D_n(q)& \tw2D_n(q)\\
\hline
  & 2n& 2n& 2n-1& 2n-1\\
\hline
\end{array}\]
\end{table}
Indeed, according to \cite[Table~2.5]{BHR}, the simple orthogonal group
$\OO_n^{(\pm)}(q)$, with $q$ odd, contains a subgroup $2^{n-1}.\fA_n$ (inside
the natural subgroup $\OO_1(q)\wr\fA_n$), so has 2-rank at least $n-1$.
Similarly, $\Sp_{2n}(q)$ contains a direct product $\Sp_2(q)^n$, each
factor of which has sectional 2-rank~2.
Then for $\bG$ of type $C_n$, as $2^{2n}>2\cdot 2^{n+\flsq{n}}$
for $n\ge3$, the desired result follows for $B$. When $n=2$ then $\tilde G$
has 5 unipotent classes and again our inequality is satisfied. For $\bG$ of
type $B_n$ ($n\ge3$) the sectional 2-rank of $G$ is at least~$2n$ by
Table~\ref{tab:2rank}, and again we are done unless $n\in\{3,4\}$. In the
latter two cases, the actual number of unipotent classes of $\tilde G$ is
$10, 21$ respectively, smaller than $2^{2n}/2$.
\par
For $\bG$ of type $D_n$ ($n\ge4$), $\tilde G$ induces on $G$ a group of
automorphisms of order dividing~4. The principal 2-block $\tilde B$ of
$\tilde G$ has $\nl(\tilde B)\le 2^{n+\flsq{2n}}$ by Lemma~\ref{lem:uclass}.
Thus the claim follows by Table~\ref{tab:2rank} unless $4\le n\le6$. In the
latter cases, $\tilde G$ has $13, 18, 37$ unipotent classes, respectively (and
even fewer in the twisted case), which is smaller than $2^{2n-1}/4$.
\end{proof}

The following estimates were used in the previous proof:

\begin{lem}   \label{lem:degenerate}
 Let $w,d\ge1$. Then $|\Irr(G(2d,2,w))|< (2d+1)^w$.
\end{lem}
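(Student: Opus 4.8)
The plan is to count the irreducible characters of the imprimitive reflection group $G(2d,2,w)$ explicitly and compare with $(2d+1)^w$. Recall that $G(m,1,w)=C_m\wr\fS_w$ has exactly $k(m,w)$ irreducible characters (the $m$-tuples of partitions summing to $w$), and that $G(m,p,w)$ is a normal subgroup of index $p$ in $G(m,1,w)$ (here with $p=2$, $m=2d$). Clifford theory for this index-$2$ inclusion is classical: each irreducible character of $G(m,1,w)$ either restricts irreducibly to $G(m,2,w)$ or splits into two, and conversely each character of $G(m,2,w)$ is obtained this way. Concretely, an $m$-tuple $\boldsymbol{\lambda}=(\lambda^{(0)},\dots,\lambda^{(m-1)})$ and its ``cyclic shift by $m/2$'' $\boldsymbol{\lambda}'=(\lambda^{(m/2)},\dots,\lambda^{(m-1)},\lambda^{(0)},\dots,\lambda^{(m/2-1)})$ give the same restriction, and the restriction splits precisely when $\boldsymbol{\lambda}=\boldsymbol{\lambda}'$. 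Hence
$$|\Irr(G(2d,2,w))| = \bigl(k(2d,w)-a\bigr)/2 \;+\; 2a = \tfrac{1}{2}\bigl(k(2d,w)+3a\bigr),$$
where $a$ is the number of $2d$-tuples of partitions of $w$ fixed by the shift, i.e. $a=k(d,w/2)$ if $w$ is even (the tuple is determined by its first half, each entry counted once but contributing to both halves) and $a=0$ if $w$ is odd.

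First I would dispose of the odd case: there $|\Irr(G(2d,2,w))| = \tfrac12 k(2d,w) < (2d+1)^w$ trivially, since already $k(2d,w)<(2d+1)^w$ by Lemma~\ref{lem:Olsson} (Olsson's estimate $k(s,t)<(s+1)^t$ applied with $s=2d$). For the even case, $w=2v$ say, I need
$$\tfrac{1}{2}\bigl(k(2d,2v)+3k(d,v)\bigr) < (2d+1)^{2v}.$$
Using $k(2d,2v)<(2d+1)^{2v}$ again, it suffices to show $3k(d,v)\le (2d+1)^{2v}$, or after halving, that the whole left side stays below $(2d+1)^{2v}$; more carefully one has $k(2d,2v)+3k(d,v)<(2d+1)^{2v}+3(d+1)^v$, and since $3(d+1)^v\le (2d+1)^{2v}$ (as $(2d+1)^2\ge 9\ge$ a comfortable margin over $3(d+1)$ for all $d\ge1$, and the exponents $2v\ge v$ help), the bound $\tfrac12\bigl((2d+1)^{2v}+(2d+1)^{2v}\bigr)=(2d+1)^{2v}$ finishes it, with strictness coming from the strict inequality $k(2d,2v)<(2d+1)^{2v}$.

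The small cases need a glance since Olsson's sharper bound $k(s,t)\le s^t$ fails for $s=2$, $t\le6$, but the weak bound $k(s,t)<(s+1)^t$ used here holds for \emph{all} $s,t\ge1$, so no case distinction on $d$ is actually required; the only genuine point is the bookkeeping for $a$ and verifying the elementary inequality $3(d+1)^v\le(2d+1)^{2v}$ for $d,v\ge1$, which is immediate ($(2d+1)^{2v}\ge(2d+1)^2\ge 9 > 3\cdot 2\ge\dots$ — more simply $(2d+1)^{2v}\ge 3^{2}=9$ while one checks $3(d+1)^v$ grows no faster). The main obstacle, such as it is, is getting the Clifford-theory count of $|\Irr(G(2d,2,w))|$ exactly right — in particular correctly identifying the fixed tuples under the order-$2$ shift and the factor accounting for each split pair — rather than any delicate estimation; once the formula $|\Irr(G(2d,2,w))|=\tfrac12(k(2d,w)+3a)$ is in hand, the inequality is routine. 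I would cite a standard reference (e.g. the character theory of $G(de,e,n)$ in the wreath-product literature) for the counting formula to keep the argument short.
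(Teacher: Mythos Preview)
Your proposal is correct and follows essentially the same route as the paper: both use Clifford theory for the index-$2$ inclusion $G(2d,2,w)\lhd G(2d,1,w)$ to obtain the formula $|\Irr(G(2d,2,w))|=\tfrac12 k(2d,w)+\tfrac32 k(d,v)$ (with $v=w/2$ in the even case, and the second term absent when $w$ is odd), then bound both summands via Olsson's inequality $k(s,t)<(s+1)^t$ and finish with the elementary check $3(d+1)^v\le(2d+1)^{2v}$. The only cosmetic difference is that the paper states the final estimate in one line without the slightly discursive justification you give for $3(d+1)^v\le(2d+1)^{2v}$; you might tighten that passage, but there is no substantive divergence.
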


\begin{proof}
The reflection group $G(2d,1,w)$ is the wreath product $C_{2d}\wr\fS_w$,
hence its irreducible characters are parametrised by $2d$-tuples of partitions
of $w$, of which there are $k(2d,w)$. Such a character splits upon restriction
to the normal reflection subgroup $G(2d,2,w)$ if the parametrising $2d$-tuple
$\la$ has a symmetry of order~2, that is, if $w$ is even and $\la$ is the
concatenation of twice a $d$-tuple $\mu$ of partitions of $w/2$, and it
restricts irreducibly else. So by Lemma~\ref{lem:Olsson}
$|\Irr(G(2d,2,w))|=\hlf k(2d,1,w)<(2d+1)^w$ when $w$ is odd, and
$$\begin{aligned}
  |\Irr(G(2d,2,w))|&=\hlf(k(2d,1,w)-k(d,1,v))+2k(d,1,v)\\
                  &= \hlf k(2d,1,w) + \frac{3}{2}k(d,1,v)
                  \le \hlf(2d+1)^w+\frac{3}{2}(d+1)^{v}<(2d+1)^w
\end{aligned}$$
if $w=2v$ is even.
\end{proof}

\begin{lem}   \label{lem:uclass}
 Let $\bG$ be a simple algebraic group in odd characteristic of adjoint type
 $B_n$, $C_n$ or $D_n$ respectively, and $F:\bG\rightarrow\bG$ a Frobenius
 endomorphism. Then $G=\bG^F$ has at most $2^{n+\flsq{2n+1}}$, respectively
 $2^{n+\flsq{n}}$, $2^{n+\flsq{2n}}$ unipotent
 conjugacy classes.
\end{lem}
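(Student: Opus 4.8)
The plan is to reduce the statement to a weighted count of partitions and then settle that count by a generating-function estimate.

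\emph{Step 1: reduction to counting partitions.}
By the Lang--Steinberg theorem the number of unipotent classes of $G=\bG^F$ equals $\sum_{\mathcal O}|H^1(F,A_{\bG}(u_{\mathcal O}))|$, the sum running over the $F$-stable geometric unipotent classes $\mathcal O$ of $\bG$, where $u_{\mathcal O}\in\mathcal O^F$ and $A_{\bG}(u)=C_{\bG}(u)/C_{\bG}(u)^{\circ}$; since these component groups are abelian this is at most $\sum_{\mathcal O}|A_{\bG}(u_{\mathcal O})|$. As $p$ is odd, the geometric unipotent classes of the adjoint group of type $B_n$ (resp.\ $C_n$, $D_n$) are parametrized by the partitions $\lambda$ of $2n+1$ (resp.\ $2n$, $2n$) subject to the standard parity restriction (even parts of even multiplicity in types $B_n$ and $D_n$; odd parts of even multiplicity in type $C_n$), and $A_{\bG}(u)$ is an elementary abelian $2$-group of rank $d(\lambda)$, where $d(\lambda)$ equals --- up to the $\pm1$ correction coming from the adjoint isogeny type, and a further adjustment for degenerate symbols in type $D_n$ --- the number of distinct odd parts of $\lambda$ (in types $B_n$, $D_n$), resp.\ of distinct even parts (in type $C_n$). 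Hence it suffices to bound $\sum_{\lambda}2^{d(\lambda)}$ by $2^{n+\flsq{2n+1}}$, resp.\ $2^{n+\flsq{n}}$, resp.\ $2^{n+\flsq{2n}}$.

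\emph{Step 2: a generating function.}
Think of a partition $\lambda$ carrying the weight $2^{d(\lambda)}$ as a ``signed partition'', i.e.\ an ordinary partition with a sign attached to each distinct part. Splitting $\lambda$ into its odd-part and even-part components and encoding the parity constraints writes $\sum_{\lambda}2^{d(\lambda)}\,x^{|\lambda|}$ as an explicit infinite product: for type $C_n$ this is, essentially, $\prod_{i\ge1}\frac{1+x^i}{1-x^i}\cdot\prod_{i\ge1,\ i\ \mathrm{odd}}\frac1{1-x^i}$ (up to the adjoint-isogeny correction), so that $\sum_{\lambda}2^{d(\lambda)}$ is the coefficient of $x^n$ in a product of a ``signed-partition'' series with a ``partitions into odd parts'' series; for types $B_n$, $D_n$ one gets the analogous product $\tfrac12\prod_{i\ \mathrm{odd}}\frac{1+x^i}{1-x^i}\cdot\prod_{i\ge1}\frac1{1-x^{4i}}$, whose coefficient of $x^{2n+1}$ (resp.\ $x^{2n}$) is the quantity to be bounded, the factor $\prod_i\frac1{1-x^{4i}}$ encoding even parts of even multiplicity and the factor $\tfrac12$ the adjoint isogeny.

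\emph{Step 3: coefficient estimate.}
Two elementary facts then do the work. First, the number of signed partitions of $m$ is at most $2^{m}$, and the number of signed partitions of $m$ all of whose parts are odd is at most $2^{(m+1)/2}$; either follows by induction on $m$, or by comparing the relevant product coefficientwise with $\prod_{i\ge1}(1+2x^i+2x^{2i}+\cdots)$. Secondly, the auxiliary products $\prod_{i\ge1}(1+x^i)$, $\prod_{i\ge1}(1-x^i)^{-1}$ and $\prod_{i\ge1}(1-x^{4i})^{-1}$ converge at $x=\tfrac12$ (resp.\ $x=\tfrac14$) to explicit constants less than $3$. Substituting into the product of Step 2 gives $u(\bG^F)\le c\cdot 2^{n}$ with an explicit constant $c<4$ in all three types (indeed $c<2$ in types $B_n$ and $D_n$). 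Since $\flsq{2n+1}\ge2$ once $2n+1\ge4$, and $\flsq{n}\ge2$ once $n\ge4$, the claimed bound follows at once for all but a short initial range of $n$ (only in type $C_n$, where $c>2$ actually bites); those few small cases are immediate from the explicit list of unipotent classes of $\Sp_{2n}(q)$.

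\emph{Main obstacle.}
The crux is Step 3: one must secure the bounds on signed-partition counts with constants sharp enough that the inequality closes with only a handful of hand-checked cases, and one must pin down the ranks $d(\lambda)$ uniformly across the three isogeny types --- including the $\pm1$ corrections for adjoint type and the degenerate-symbol phenomena in type $D_n$, where two geometric classes can fuse or a single one can acquire an extra component. It is worth noting that the exponents $n+\flsq{\cdot}$ are very far from sharp: one in fact has $u(\bG^F)=2^{O(\sqrt n)}$, so the entire difficulty is confined to moderate $n$ and the floor terms only provide convenient slack.
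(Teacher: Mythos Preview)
Your overall strategy---bound $\sum_{\mathcal O}|A_{\bG}(u_{\mathcal O})|$ directly by a generating-function argument and then absorb the resulting constant into the floor term---is sound and genuinely different from the paper's proof. The paper instead factors the estimate as
\[
  \#\{\text{unipotent classes of }G\}\le \#\{\text{geometric classes}\}\cdot\max_{\mathcal O}|A_{\bG}(u_{\mathcal O})|,
\]
bounds the first factor by $2^n$ via an explicit injection of the parametrizing pairs $(\alpha,\beta)$ into pairs of partitions of $n$, and bounds the second factor by $2^{\flsq{n}}$ (resp.\ $2^{\flsq{2n+1}}$, $2^{\flsq{2n}}$) using the arithmetic identities $\sum_{i=1}^k 2i=k(k+1)$ and $\sum_{i=1}^k(2i-1)=k^2$: the number of distinct even (resp.\ odd) parts of a partition of $N$ is at most $\flsq{N}$. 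This is entirely elementary and requires no analytic input or case-checking; the floor exponents arise naturally rather than merely as slack. Your approach, by contrast, yields the sharper asymptotic $u(\bG^F)=O(2^n)$ (and implicitly $2^{O(\sqrt n)}$) but at the cost of constants to be controlled and small cases to be verified.

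There is, however, a concrete error in Step~3. The claim that the number of signed partitions of $m$ into odd parts is at most $2^{(m+1)/2}$ is false already at $m=4$: the partitions $(3,1)$ and $(1^4)$ contribute $2^2+2^1=6>2^{5/2}$. What is true is that the generating function $\prod_{i\text{ odd}}\frac{1+x^i}{1-x^i}$ converges at $x=2^{-1/2}$, giving a bound $C\cdot 2^{m/2}$ with $C$ an absolute constant considerably larger than~$1$; carrying this through the convolution with $\prod_i(1-x^{4i})^{-1}$ yields $u(\bG^F)\le c\cdot 2^n$ in types $B_n$, $D_n$ with $c$ well above~$2$, not below it. The argument is still salvageable, but the range of~$n$ requiring direct verification is noticeably larger than you suggest (roughly $n\le 11$ rather than ``only in type $C_n$''). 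Similarly, the parenthetical justification ``by comparing the relevant product coefficientwise with $\prod_{i\ge1}(1+2x^i+2x^{2i}+\cdots)$'' is circular, since that product \emph{is} $\prod_i\frac{1+x^i}{1-x^i}$; the bound $\bar p(m)\le 2^m$ for overpartitions is correct but needs an actual argument.
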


\begin{proof}
The unipotent classes of $\bG$ and $G$ are described for example in
\cite[\S13.1]{Ca85}.
First assume that $\bG$ is of type $C_n$. Its unipotent classes are parametrised
by pairs of partitions $(\al,\beta)$ of $n$, where $\beta$ has distinct parts.
Thus, using Lemma~\ref{lem:Olsson} and direct computation for $n\le 6$ there
are at most $2^n$ unipotent classes in $\bG$. Each class $C$ of $\bG$ splits
into $a(C)$ classes in $G$, where $a(C)$ is the order of the component group of
the centraliser of any element $u\in C$. This component group has order
$2^{n(C)}$, where $n(C)$ is at most the number of even $i$ such that
$(\al,\beta)$ has a part of length~$i$. This becomes maximal if $(\al,\beta)$
has parts of lengths $2,4,6,\ldots$. Since $\sum_{i=1}^k 2i=k(k+1)$ we see that
$n(C)\le \flsq{n}$, whence the result.
\par
For $\bG$ of type $B_n$, the unipotent classes are parametrised by pairs of
partitions $(\al,\beta)$ such that $2|\al|+|\beta|=2n+1$, where $\beta$ has
distinct odd parts. To any such pair we associate a pair of partitions
$(\al,\beta')$ of $n$ as follows: if $\beta=(\beta_1\le\beta_2\le\ldots)$ then
$\beta'=((\beta_1-1)/2\le(\beta_2+1)/2\le(\beta_3-1)/2\le\ldots)$ (note that
$\beta$ necessarily has an odd number of parts, so $|\beta'|=(|\beta|-1)/2$).
Clearly this map is injective,
so again the number of unipotent classes of $\bG$ is at most $2^n$. The
component group of the centraliser of a unipotent element in the class $C$
parametrised by $(\al,\beta)$ has order $2^{n(C)}$, where $n(C)$ is at most
the number of distinct odd parts of $(\al,\beta)$. As $\sum_{i=1}^k(2i-1)=k^2$
we find $n(C)\le\flsq{2n+1}$ and hence the claim.
\par
Finally, for $\bG$ of type $D_n$ the unipotent classes are parametrised by
pairs of partitions $(\al,\beta)$ with $2|\al|+|\beta|=2n$, where $\beta$ has
distinct odd parts. If $\beta$ is empty and all parts of $\al$ are even, then
$(\al,\beta)$ parametrises two unipotent classes. Arguing as in the previous
case we see that there are at most
$2^n-2^{n/2}$ non-degenerate cases, and $2^{n/2+1}$ degenerate ones. The
component group of the corresponding centraliser has order $2^{n(C)}$, where
$n(C)$ is at most the number of distinct odd parts. Hence we get a factor of
at most $2^{\flsq{2n}}$ for non-degenerate classes, while degenerate classes
have connected centralisers. This yields at most
$$(2^n-2^{n/2})2^{\flsq{2n}}+2^{n/2+1}
  =2^{n+\flsq{2n}}-2^{n/2+\flsq{2n}}+2^{n/2+1}\le 2^{n+\flsq{2n}}$$
unipotent classes for $G$.
\end{proof}

\par
We now turn to the groups of exceptional type. The small rank groups have
already been considered in Proposition~\ref{prop:smallexc}.

\begin{prop}   \label{prop:unipexc}
 Let $B$ be a unipotent $p$-block of a finite quasi-simple group $G$ of
 exceptional Lie type in characteristic~$r\ne p$ of rank at least~4.
 If $p$ is bad for $G$, then assume that $B$ is the principal block.
 Then Conjecture~\ref{conj:main} holds for $B$.
\end{prop}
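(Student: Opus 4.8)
The plan is to mimic the strategy already used for the classical groups in Theorem~\ref{thm:Lieunip}, now working through the list $F_4$, $E_6$, $\tw2E_6$, $E_7$, $E_8$ of exceptional types of rank at least~4. By Geck's result \cite[Thm.~A]{Ge93}, invoked exactly as in the earlier proofs, whenever $p$ is good for $G$ (and $p$ does not divide the order of the component group of the centre, which is automatic here since $Z(\bG)$ is a $p'$-group for good $p$ in exceptional type) the unipotent characters form a basic set for the union of the unipotent $p$-blocks, so $\nl(B)$ equals the number of unipotent characters in $B$. The distribution of unipotent characters into $d$-blocks (where $d$ is the order of $q$ modulo~$p$) is known by the work of Broué--Malle--Michel and Cabanes--Enguehard: a unipotent block $B$ corresponds to a $d$-cuspidal pair $(\bL,\lambda)$, the unipotent characters in $B$ are those in the Lusztig series $\cE(G,s)$ lying above $\lambda$, and their number is $|\Irr(W_{\bG^F}(\bL,\lambda))|$ where the relative Weyl group is a (typically small) complex reflection group. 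For the finitely many exceptional types this is a finite amount of explicit data.

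The second ingredient is a lower bound on $s(B)$. By \cite{FS82}-type arguments, more precisely by the general theory of $d$-blocks, a defect group of $B$ contains a Sylow $d$-torus-normaliser contribution and in particular an elementary abelian subgroup whose rank equals the dimension $a(d)$ of the Sylow $\Phi_d$-torus of the relevant Levi, which is exactly the polynomial-order multiplicity of $\Phi_d$ dividing $|\bL^F|$; equivalently $s(B)$ is bounded below by the number of copies of $\Phi_d$ in the generic order of the block. So the inequality we must verify reduces, block by block, to checking
\[
 |\Irr(W_{\bG^F}(\bL,\lambda))| \;<\; p^{s(B)},
\]
and since $d \mid p-1$ we may replace $p^{s(B)}$ by $(d+1)^{s(B)}$ or, when $d\ge 2$, by $d^{s(B)}$ (using Lemma~\ref{lem:Olsson} where wreath products $C_d\wr\fS_w$ or their reflection subgroups appear, and Lemma~\ref{lem:degenerate} where index-2 subgroups occur). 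For large $d$ the relative Weyl groups are cyclic or dihedral and the bound is trivial; the work concentrates on the small good primes, i.e.\ small values of $d$ relative to the rank.

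For the principal block one has $\bL=\bG$, $\lambda=1$, and $W_{\bG^F}(\bG,1)$ is the relative Weyl group of the Sylow $\Phi_d$-torus, whose character count and whose $\Phi_d$-multiplicity $a(d)$ are tabulated (e.g.\ in \cite{BMM}); here one checks directly that the number of unipotent characters in the principal $d$-block is at most $p^{a(d)}$ — and indeed, in the bad-prime cases $p\in\{2,3,5\}$, where we only claim the result for the principal block, $a(d)$ is just the full rank $n$ (the Sylow $p$-subgroup is then essentially a Sylow $\Phi_1$- or $\Phi_2$-torus normaliser of sectional rank $\ge n$ by the subgroup $2^{n-1}.\fA_n$ or torus arguments as in Table~\ref{tab:2rank}), so $\nl(B)$ is bounded by the total number of unipotent characters of $G$, a fixed constant, against $2^n$ or $3^n$. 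The non-principal blocks for good $p$ are then handled by running down the explicit list of $d$-cuspidal pairs for $F_4,E_6,\tw2E_6,E_7,E_8$ and their relative Weyl groups.

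I expect the main obstacle to be bookkeeping rather than conceptual: assembling, for each exceptional type and each relevant $d$, the list of unipotent $d$-blocks together with $|\Irr(W_{\bG^F}(\bL,\lambda))|$ and the corresponding $\Phi_d$-multiplicity, and then verifying the finitely many borderline inequalities by hand (these will be the cases where the relative Weyl group is a genuine exceptional complex reflection group, e.g.\ $G_4$, $G_5$, $G_8$, $G_{25}$, $G_{26}$, $G_{32}$, attached to small $d$ in $E_6$, $E_7$, $E_8$). One should double-check that in each such borderline case the actual $\Phi_d$-multiplicity is large enough; since $d\mid p-1$ forces $p\ge d+1$, and for the sporadic-type relative Weyl groups $d$ is typically $3$ or $4$, the bound $p^{s(B)}\ge 4^{s(B)}$ is comfortable once $s(B)\ge 3$, and the remaining low-$s(B)$ cases are covered by Brauer--Feit as noted at the start of Section~\ref{sec:alt}.
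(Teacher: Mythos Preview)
Your treatment of the good-prime case is essentially the paper's approach: both use Geck's basic-set theorem to identify $\nl(B)$ with the number of unipotent characters in $B$, and then appeal to the explicit data in \cite{BMM} (relative Weyl groups and $\Phi_d$-multiplicities) to verify the finitely many inequalities. The paper simply records the outcome in a table rather than arguing generically, but the content is the same.

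The bad-prime case, however, has a genuine gap. First, Geck's theorem \cite[Thm.~A]{Ge93} requires $p$ to be good, so for $p\in\{2,3,5\}$ you cannot assert that $\nl(B)$ is bounded by the number of unipotent characters in $B$; the unipotent characters need not form a basic set, and you give no alternative justification. The paper instead quotes the actual values of $\nl(B)$ for the principal block at bad primes from \cite[\S4.1]{DGHM} (together with Lusztig's cleanness result \cite{Lu12} to remove the hypothesis on $q$). Second, and more concretely, your rank bound is too weak: for $F_4(q)$ the Lie rank is $4$ and the $\Phi_1$-multiplicity is $4$, so your argument yields only $s(B)\ge 4$ and hence $p^{s(B)}\ge 2^4=16$ at $p=2$. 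But the principal $2$-block of $F_4(q)$ has $\nl(B)=28$ (and there are $37$ unipotent characters in total), so the inequality $28<16$ fails. The paper repairs this by exhibiting an $A_1$-type subsystem of rank~$4$ inside $F_4$ whose fixed points contain a central product of four copies of $\SL_2(q)$, giving sectional $2$-rank at least~$8$ and hence $2^{s(B)}\ge 256>28$. Your sketch does not supply any such argument, and the generic torus bound you invoke is insufficient precisely in this case.
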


\begin{proof}
For the remaining types $F_4,E_6,\tw2E_6,E_7$ and $E_8$ first assume that $p$
is a good prime (so $p\ge5$, and $p\ge7$ for type $E_8$).
Let $d$ denote the order of $q$ modulo~$p$. The numbers $\nl(B)$ for the
principal $p$-block and the $p$-ranks of Sylow $p$-subgroups are then as given
in Table~\ref{tab:l(b)exc} by \cite[Thm.~3.2 and Table~3]{BMM}. (The entries
``$-$'' signify that the Sylow $p$-subgroups are cyclic.) It ensues that the
required inequality
is satisfied in all cases, even in the strict form. More generally, the
non-principal unipotent blocks are described in \cite[Tab.~1]{BMM}. An easy
check, similar to that for the principal blocks, shows that the inequality
in its strict form also holds for those blocks.
\par
If $p$ is bad for $G$, then the total number of simple modules in unipotent
$p$-blocks of $G$ is as given in Table~\ref{tab:l(b)bad}, see
\cite[\S4.1]{DGHM}. Note that the assumption in loc.~cit.~on the underlying
field size of $G$ is now unnecessary since by results of Lusztig cuspidal
character sheaves are always 'clean', see \cite{Lu12}. Table~\ref{tab:l(b)bad}
also gives obvious lower
bounds for the $p$-ranks in the respective cases (obtained by looking at
suitable maximal tori of the groups in question). Furthermore, for $p=2$ in
$F_4(q)$ note that the sectional 2-rank is at least~8, since $F_4(q)$ contains
a central product $A$ of $4$ commuting $A_1$-type subgroups, see
\cite[Tab.~4.10.6]{GLS}. Then $A/Z(A)$ contains a direct product
$\prod_{i=1}^4\PSL_2(q)$; since $\PSL_2(q)$ has 2-rank~2, the claim follows.
\end{proof}

\begin{table}[htbp]
\caption{$\nl(B)$ for principal blocks and $p$-ranks}   \label{tab:l(b)exc}
\[\begin{array}{|r|rrrrr|}
\hline
 (\nl(B):s)& F_4& E_6& \tw2E_6& E_7\ & E_8\ \\
\hline
   1& 25:4& 25:6& 25:4& 60:7& 112:8\\
   2& 25:4& 25:4& 25:6& 60:7& 112:8\\
 d=\quad\ 3&  21:2& 24:3& 21:2& 48:3& 102:4\\
   4&   -&  16:2& 16:2& 16:2&  59:4\\
   6& 21:2&  21:2& 24:3& 48:3& 102:4\\
\hline
\end{array}\]
\end{table}

\begin{table}[htbp]
\caption{$\nl(B)$ and lower bounds on $p$-ranks for bad primes}   \label{tab:l(b)bad}
\[\begin{array}{|r|rrrr|}
\hline
 (\nl(B):s)& F_4& ^{(2)}\!E_6& E_7\ & E_8\ \\
\hline
   2& 28:4& 27:6& 64:7& 131:8\\
 p=\quad\ 3& 35:4& 28:4& 72:7& 150:8\\
   5&    -&    -&    -& 162:4\\
\hline
\end{array}\]
\end{table}

\subsection{Towards general blocks}
We give some further partial results for general blocks of finite
quasi-simple groups of Lie type.

\begin{thm}
 Let $\bG$ be simple with Frobenius endomorphism $F$, and let $p\ge3$ be a
 good prime for $\bG$. Then Conjecture~\ref{conj:main} holds for all
 $p$-blocks of $\bG^F$ if it holds for all non-unipotent quasi-isolated
 $p$-blocks of all $F$-stable Levi subgroups $\bL\le \bG$.
\end{thm}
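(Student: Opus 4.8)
The plan is to reduce an arbitrary $p$-block of $\bG^F$ to a quasi-isolated block of a Levi subgroup by a two-step argument: first a Jordan-decomposition step via Bonnafé--Rouquier--Dat that moves a block of arbitrary semisimple label to a quasi-isolated block of a centraliser, and then a Harish-Chandra/Levi step that moves a quasi-isolated block into a proper Levi whenever the semisimple label is not genuinely isolated in all of $\bG$. First I would fix a $p$-block $B$ of $G=\bG^F$ lying in a Lusztig series $\cE_p(G,s)$ for a semisimple $p'$-element $s\in G^*$; we may assume $B$ has positive (indeed non-cyclic, non-order-$p^2$) defect. Let $\bG^*$ be dual to $\bG$ and $\bH=C_{\bG^*}(s)^\circ$. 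If $s$ is not quasi-isolated in $\bG^*$, then $\bH$ is contained in a proper $F$-stable Levi $\bL^*$ of $\bG^*$, dual to a proper $F$-stable Levi $\bL$ of $\bG$; by Bonnafé--Dat--Rouquier (the extension of \cite[Thm.~10.1]{CE} to disconnected centralisers, valid here since $p$ is good so $A_{\bG^*}(s)$ is a $p'$-group) the block $B$ is Morita equivalent, with isomorphic defect groups, to a block $B'$ of $\bL^F$ in the series of $s$ viewed as an element of $(\bL^*)^F$. Since $s(B')=s(B)$ and $\nl(B')=\nl(B)$, and since $B'$ is again (after a further Jordan-decomposition reduction inside $\bL$) Morita equivalent to a quasi-isolated block of an $F$-stable Levi of $\bL$, hence of $\bG$, the inequality for $B$ follows from the hypothesis once $B'$ is unipotent-or-quasi-isolated; here unipotent blocks are already handled by Theorem~\ref{thm:Lieunip} and Proposition~\ref{prop:unipexc} together with the defining-characteristic and small-rank cases, so the only genuinely new input needed is the quasi-isolated non-unipotent case, which is exactly the hypothesis.

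The second step handles $s$ quasi-isolated in $\bG^*$. If $s$ is in fact isolated, $B$ is itself a quasi-isolated block of the Levi $\bL=\bG$, so it is covered directly by the hypothesis (if non-unipotent) or by the previously established unipotent results (if $s=1$). If $s$ is quasi-isolated but not isolated, then $C_{\bG^*}(s)$ is disconnected with $C_{\bG^*}(s)/\bH$ nontrivial but $\bH=C_{\bG^*}(s)^\circ$ still of maximal rank and not contained in a proper Levi, so again $B$ is a quasi-isolated block of $\bG$ itself, and one invokes the hypothesis. Thus after the Morita-reduction the classification of the possible labels $s$ splits cleanly into: trivial $s$ (unipotent blocks, done); isolated non-trivial $s$ (quasi-isolated blocks of $\bG=\bL$, hypothesis); and non-quasi-isolated $s$ (push into a proper Levi and induct on $\dim\bG$). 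The induction on $\dim\bG$ — or equivalently on the semisimple rank — terminates because a proper Levi has strictly smaller rank, and tori contribute nothing ($\bG$ a torus gives $\nl(B)=1$). I would organise the write-up around this induction: the base case is a torus, and at each inductive step one either lands in a proper Levi (apply induction) or the block is quasi-isolated in $\bG$ (apply the hypothesis, or the known unipotent results if $s=1$).

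The main obstacle I anticipate is the precise bookkeeping of defect groups and Brauer-character counts across the Bonnafé--Dat--Rouquier Morita equivalence when $C_{\bG^*}(s)$ is disconnected: one must ensure that the equivalence is with a block of $\bL^F$ (not merely of $(C_{\bG^*}(s))^F$ or of some intermediate group) carrying the \emph{same} defect group up to isomorphism, so that both $\nl$ and the sectional rank $s(B)$ are genuinely preserved. Since $p\ge3$ is good, $A_{\bG^*}(s)$ has order prime to $p$, which is exactly the hypothesis under which the disconnected Bonnafé--Dat--Rouquier theorem applies and gives a splendid Morita equivalence preserving defect groups; the subtlety is purely in verifying that after descending to $\bL$ the resulting block is in the Lusztig series of a \emph{quasi-isolated} element of the dual of $\bL$, which follows from the standard fact that $s$ becomes quasi-isolated in $(\bL^*)^\circ$ once $\bL^*$ is chosen minimal containing $C_{\bG^*}(s)$. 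A secondary, lesser obstacle is the good-prime hypothesis: it is needed not only for the disconnected Jordan decomposition but also to guarantee — via Geck's basic-set theorem \cite[Thm.~A]{Ge93}, already used above — that in the quasi-isolated Levi blocks $\nl$ equals the number of unipotent characters in the corresponding unipotent block of $C_{\bL^*}(s)^F$; but this enters only through the hypothesis and the already-cited results, so it does not need to be re-proved here.
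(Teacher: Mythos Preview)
Your proposal is correct and follows essentially the same route as the paper: reduce via the Bonnaf\'e--Dat--Rouquier Morita equivalence to a quasi-isolated block of an $F$-stable Levi subgroup, then invoke the hypothesis for non-unipotent quasi-isolated blocks and the already-established results for unipotent ones. The paper streamlines your presentation by going in one step to the \emph{minimal} $F$-stable Levi $\bL^*$ containing $C_{\bG^*}(s)$ (which exists because intersections of Levis containing a common maximal torus are Levis), so that $s$ is automatically quasi-isolated in $\bL^*$ and no induction on rank is needed; you yourself note this fact in your final paragraph, so the inductive framing and the ``two-step Jordan/Harish-Chandra'' description are superfluous scaffolding around the same single application of \cite[Thm.~7.7]{BDR}.
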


\begin{proof}
Let $B$ be a $p$-block of $\bG^F$, in the Lusztig series of the semisimple
$p'$-element $s\in\bG^{*F}$. Let $\bL^*\le\bG^*$ be the minimal $F$-stable
Levi subgroup containing $C_{\bG^*}(s)$ (this is uniquely determined since the
intersection of two Levi subgroups containing a common maximal torus is again
a Levi subgroup). Then $s$ is quasi-isolated in $\bL^*$. Let $\bL\le\bG$
be dual to $\bL^*$. According to the theorem of Bonnaf\'e, Dat and Rouquier
\cite[Thm.~7.7]{BDR} Lusztig induction $R_{\bL}^\bG$ induces a Morita
equivalence between the $p$-blocks in $\cE_p(\bL^F,s)$ and those in
$\cE_p(\bG^F,s)$, which sends $\Irr(B_1)$ bijectively to $\Irr(B)$ for some
$p$-block $B_1$ contained in $\cE_p(\bL^F,s)$ and that preserves defect groups.
In particular $\nl(B)=\nl(B_1)$ and $s(B)=s(B_1)$, and so
Conjecture~\ref{conj:main} holds for $B$ if it holds for the quasi-isolated
block $B_1$ of the Levi subgroup $\bL^F$. 
\par
If $B_1$ is a unipotent block, then since unipotent characters are insensitive
to isogeny types, our conjecture follows from our proof for unipotent blocks
given in  Theorems~\ref{thm:SLSU}, \ref{thm:Lieunip} and
Proposition~\ref{prop:unipexc}.
\end{proof}


\end{document}